\newcommand{\N}{\mathbb{N}}
\declaretheorem[style=mystyleplain,name=Theorem A,numbered=no]{theoremA}
\declaretheorem[style=mystyleplain,name=Theorem B,numbered=no]{theoremB}
\newenvironment{prop}{\begin{proposition}}{\end{proposition}}
\let\orgdescriptionlabel\descriptionlabel
\renewcommand*{\descriptionlabel}[1]{%
  \let\orglabel\label
  \let\label\@gobble
  \phantomsection
  \edef\@currentlabel{#1}%
  \let\label\orglabel
  \orgdescriptionlabel{#1}%
}
\renewcommand{\P}{\mathbb{P}}
\newcommand{\1}{\mathbf{1}}
\newcommand{\E}{\mathbb{E}}
\title{Loop vs.\ Bernoulli percolation on trees:\\ strict inequality of critical values}
\author{
Andreas Klippel 
\thanks{Fachbereich Mathematik, Technische Universit\"at Darmstadt, Schlossgartenstrasse 7, 64289 Darmstadt, Germany}\\ andreas.klippel@tu-darmstadt.de
\and
Benjamin Lees \orcidlink{0000-0003-2657-5358}
\thanks{School of Mathematics, University of Leeds, Leeds, LS2 9JT, UK}\\b.t.lees@leeds.ac.uk \\
\and
Christian M\"{o}nch \orcidlink{0000-0002-6531-6482}\thanks{Independent Researcher, 64289 Darmstadt, Germany} \\ cmoench25@gmail.com
}
\date{\today}
\begin{document}
\maketitle

\begin{abstract}
We study loop ensembles on locally finite rooted trees. The loops are induced by a Poisson process of links on the edges; the transposition-only case is the random interchange process, and the same framework covers loop representations of quantum spin systems. The set of edges carrying at least one untyped link forms i.i.d.\ Bernoulli bond percolation with retention probability \(1-e^{-\beta}\), so an infinite loop can occur only if the corresponding link cluster is infinite. Our main results for Galton--Watson trees show both phenomena: if the offspring distribution has finite mean \(m\in(1,\infty)\), then, conditioned on survival, the loop threshold is strictly larger than the link threshold; in contrast, in the random interchange case a heavy-tail
condition on the offspring distribution forces the loop and link thresholds,
averaged over both the tree and the link configuration, to coincide at zero. The
separation theorem follows from a stronger deterministic criterion based on
local loop configurations that cut descendant subtrees out of link clusters.
\end{abstract}

\section{Introduction}
Recent years have seen increasing interest in the cycle and permutation structure of the random stirring model that was introduced by Harris \cite{harris_markov_lattices_1972} in the early 1970s. Under the name \emph{random interchange process}, this model has attracted attention for its connection to the quantum Heisenberg model. It is also valued as a beautiful probabilistic method for studying permutations with geometric constraints.

On a graph \( G = (V, E) \), the random interchange process is defined as follows: Each vertex of the graph initially hosts a labelled particle, and each edge of the graph is equipped with an independent Poisson process of rate $\beta>0$ on $[0,1)$. Whenever the Poisson clock associated with an edge rings, the two particles at the endpoints of that edge swap positions. This process induces a random permutation \( \pi_\beta: V \to V \) for any \( \beta > 0 \).

A long-standing conjecture attributed to T\'oth, as discussed in \cite{elboim2024infinitecyclesinterchangeprocess}, is that for \(\mathbb{Z}^d\) with \(d \ge 3\) and sufficiently large \(\beta\), the permutation \(\pi_\beta\) contains infinite cycles almost surely. A major advance on this conjecture is the recent work of Elboim and Sly, who prove that infinite cycles exist almost surely for $d\geq5$.

Observe that in order to obtain an infinite cycle, it is necessary to have at least one Poisson point on each edge used by the cycle. This observation gives a necessary condition for the existence of infinite loops: there must be an infinite cluster in the Bernoulli edge percolation with retention probability \(1-e^{-\beta}\) induced by the Poisson process. The main question we investigate is how far this necessary condition is from being sufficient on trees.

A remarkable result by Schramm \cite{schramm_compositions_2005} shows that on the complete graph \(K_n\), where edge rates are \(\beta/n\), the existence of macroscopic cycles and the emergence of a giant percolation component are asymptotically equivalent as \(n \to \infty\), implying that their critical parameters coincide. Moreover, it is shown that the normalised ordered sizes of the loops in the giant component exhibit a Poisson-Dirichlet \(\operatorname{PD}(1)\) structure.

In contrast to Schramm's result, M\"uhlbacher \cite{muehlbacher_critical_2021} showed that on infinite graphs of bounded degree, the two critical parameters differ. An explicit lower bound on the size of this difference was recently provided by Betz et al.\ \cite{betz2024looppercolationversuslink}.

Thus, whether percolation is also a sufficient condition depends on the underlying graph. On Galton--Watson trees we prove two complementary results:

\noindent \emph{For supercritical Galton--Watson trees with finite mean offspring, the critical values of the random loop model and of induced link percolation differ.}

\noindent \emph{For a class of sufficiently heavy-tailed offspring laws in the random interchange case, the loop and link thresholds coincide at zero after averaging over both the tree and the link configuration.}

These are Theorems~A and~B below. The proof of Theorem~A uses a stronger deterministic tree criterion, but the criterion is best stated after the technical notation has been introduced. Heuristically, certain local loop configurations cut off descendant subtrees from the link cluster. If this cutting mechanism occurs often enough along the relevant infinite directions of the tree, then the resulting link cluster becomes subcritical. For finite-mean Galton--Watson trees, a large-deviation estimate supplies the required frequency.

We work with the random loop model, which extends the random interchange process. The random loop model was introduced as a representation of certain quantum spin systems.  The random stirring model gives a representation of the ferromagnetic Heisenberg model, where points of the Poisson process are referred to as \emph{crosses}. This representation was introduced by Powers \cite{powers_heisenberg} and then used by T\'oth \cite{toth_pressure} to give a lower bound on the pressure. There is also a representation of the anti-ferromagnetic Heisenberg model proposed by Aizenman and Nachtergaele \cite{aizenman_nachtergaele}.
This representation also involves independent Poisson point processes of intensity \(\beta\) assigned to the edges of \(G\). However, unlike the random stirring model, the points of these Poisson processes, referred to as \emph{bars}, do not correspond to transpositions, and the resulting random structure is richer than a permutation.

Ueltschi \cite{ueltschi_random_loops} observed that these representations can be unified to describe XXZ-models. In this setting, the intensity of \emph{crosses} is given by \(u\beta\), while the intensity of \emph{bars} is \((1-u)\beta\) for some \(u \in [0,1]\). We collectively refer to crosses and bars as \emph{links}. By following these links along \(G \times [0,1)_{\text{per}}\), we uncover a set of \emph{loops} \(\mathcal{L}\); see Figure \ref{fig: loop example} for an illustration.

The connection to spin-\(S\) quantum systems requires an exponential tilt of the loop distribution by \(\theta^{\sharp\mathcal{L}}\) with \(\theta = 2S + 1\). However, the model remains of interest even for the case $\theta=1$, which is the case treated here.

The central question in the study of the random loop model remains whether a phase transition occurs. This problem is particularly challenging to analyse due to the inherent dependencies in the model. On the lattice $\mathbb{Z}^d$, Ueltschi \cite{ueltschi_random_loops} has shown via reflection positivity that for integer $\theta \geq 2$ and for $u \in \bigl[0,\tfrac{1}{2}\bigr]$ a phase transition indeed occurs. These parameter ranges have been improved recently by Betz et al.\ \cite{betz2025improved}. Moreover, two extreme cases are well understood, namely regular trees and the complete graph.

On the one hand, for regular trees, Angel \cite{angel_permutations} established the existence of two distinct phases for $\theta=1$ on $d$-regular trees with $d\geq 5$ and showed that infinite cycles appear for $\beta \in (d^{-1}+\tfrac{13}{6}d^{-2},\log(3))$ when $d$ is sufficiently large. Hammond \cite{hammond_infinitecycles} later proved that for $d\geq 3$, there exists a critical value $\beta_0$ above which infinite cycles emerge in the case $u=1$, a result extended by Hammond and Hegde \cite{hammond_infiniteloops} to all $u\in[0,1]$. Bj\"ornberg and Ueltschi provided an asymptotic expression for the critical parameter for loops up to second order in $d^{-1}$ \cite{bjornberg_ueltschi_trees1} and further extended this analysis to the case $\theta \neq 1$ \cite{bjornberg_ueltschi_treestheta}. A comprehensive picture for $d$-regular trees with $d\geq 3$, $\theta=1$, and $u\in[0,1]$ was established by Betz et al.\ in \cite{betz_sharptrees}, proving a (locally) sharp phase transition for infinite loops in the interval $(0,d^{-1/2})$ and deriving explicit bounds on the critical parameter up to order 5 in $d^{-1}$.

We also mention the related finite-volume problem on sparse random graphs, including random regular graphs, which are locally tree-like but globally expanding. In this setting the analogue of infinite loops is the occurrence of macroscopic loops or cycles; for random regular graphs this was proved in the interchange and quantum Heisenberg setting by Poudevigne \cite{poudevigne2022macroscopic}, and it was recently extended to the random loop model on sparse random graphs by Klippel \cite{klippel2026macroscopic}.

Here, the Galton--Watson setting is a natural instance of loop percolation under \emph{quenched disorder}, where the underlying tree is itself random. Betz et al.\ \cite{BetzEhlLees18} studied this case and established the existence of a phase transition under specific conditions on the offspring law. Theorems~A and~B show that the relation between the loop and link thresholds changes sharply between finite-mean offspring laws and sufficiently heavy-tailed infinite-mean laws.

The broader picture suggested by the known examples is the following.

\textbf{Problem.}\textit{ Determine whether, for every \(u\in[0,1]\), the critical values for loop percolation and link percolation on a tree \(T\) are different precisely when the boundary of the tree, defined in Section~\ref{sec:potential}, has finite positive dimension, possibly under natural regularity assumptions on that boundary.}

Schramm's result \cite{schramm_compositions_2005} on the complete graph is expected to generalise to a large number of dense graphs. Bj\"ornberg et al.\ \cite{interchange_with_reversals} showed the emergence of a $\operatorname{PD}(\tfrac12)$ structure on $K_n$ when $u\neq 1$. The agreement of the two critical parameters has also been established for the hypercube by Koteck\'y et al.\ \cite{kotecky_hypercube} and for Hamming graphs by Mi{\l}o\'s and \c{S}eng{\"u}l \cite{milos_sengul}.

Thus, trees represent sparse graphs whose structure becomes denser as their dimension increases, interpolating between the bounded-degree setting and the diverging-degree asymptotic regimes studied in previous work.

\paragraph{Outline of the remaining sections.}
Section~\ref{sec:model} defines the random loop model and states the two
Galton--Watson main theorems. Section~\ref{sec:coupling} constructs the local
coupling between loops and an auxiliary thinning of link percolation.
Section~\ref{sec:potential} introduces the boundary theory used in the proof,
states the deterministic criterion behind Theorem~A, proves the percolation
estimate behind it, and verifies the Galton--Watson input needed for
finite-mean offspring laws. Section~\ref{sec:weighted-pruning-proof} proves the
deterministic criterion, and
Section~\ref{sec:applications-proofs} proves Theorems~A and~B.

\section{Model and main results}\label{sec:model}
Precise definitions of the random loop model can be found, for example, in
\cite{ueltschi_random_loops} and \cite{betz_sharptrees}. We restrict
ourselves to a brief overview of the main objects. Let \( G = (V,E)\) denote a
simple, connected graph with vertex set \(V\) and edge set
\(E\subset\{\{x,y\}:x,y\in V, x\ne y\}\). In this paper, we always think of
\(G\) as a \emph{rooted graph}, i.e.\ a pair \((G,o)\) of a graph \(G\) and a
distinguished vertex \(o\in V\) that we call the \emph{root}.

The space underlying a configuration is \( E \times [0, 1) \), i.e.\ to each
edge, we attach a copy of the interval \( [0, 1) \). Let \( u \in [0,1]\) and
\(\beta>0 \) be fixed. On \( E \times [0, 1) \), we define two independent
homogeneous Poisson point processes: one with intensity \( \beta u \) whose
points we call \emph{crosses}, and another with intensity \( (1 - u)\beta \)
whose points we call \emph{(double) bars}. We refer to crosses and double bars
collectively as \emph{links}. The full configuration space is denoted by
\( \Omega(G) = \mathcal{N}^{E}\times \mathcal{N}^{E}\), where \(\mathcal{N}\)
is the space of simple counting measures on \(\mathcal{B}[0,1]\), the Borel
\(\sigma\)-field of the interval \([0,1)\). We denote the distribution of the
graph \(G\) edge-marked by the Poisson processes by \( \rho_{G,\beta,u} \).
When it is necessary to specify, we denote by \(I(e)\) the copy of \([0,1)\)
attached to \(e\). For a realization \( \omega \in \Omega(G) \), we denote by
\(m_e=m_e(\omega)\in \N\cup\{0\}\) the number of links on \(e\) and by
\(M=M(\omega)\in \N\cup\{0,\infty\}\) the total number of links.

The crosses and double bars form loops, defined by the following rules:

\begin{itemize}
    \item We start at a point \( (x,t)\in V\times[0,1) \) and move in the positive time direction, thereby exploring links on edges adjacent to $x$.
    \item If we encounter a double bar at $(\{x,y\}, \tau\pm)$, we jump to $(y,\tau\pm)$ and change time direction.
    \item If we encounter a cross at $(\{x,y\}, \tau\pm)$, we jump to $(y,\tau\mp)$ and continue in the same time direction.
    \item If, for any vertex $z$, we reach $(z,1)$ while moving in the positive time direction then we jump to $(z,0)$ and continue moving in the positive time direction. If we reach $(z,0)$ while moving in the negative time direction then we jump to $(z,1)$ and continue moving in the negative time direction.
    \item If we reach $(x,t)$ again, the loop is complete.
\end{itemize}

We observe that loops are closed trajectories in the space \( V \times [0, 1) \). This concept is best understood with the aid of a visual illustration such as in Figure \ref{fig: loop example}. Notice that the last rule means that we treat time as cyclic, in particular, we may interpret the $I(e),e\in E,$ as one-dimensional tori.

\begin{figure}
    \centering
\includegraphics[width=0.49\textwidth]{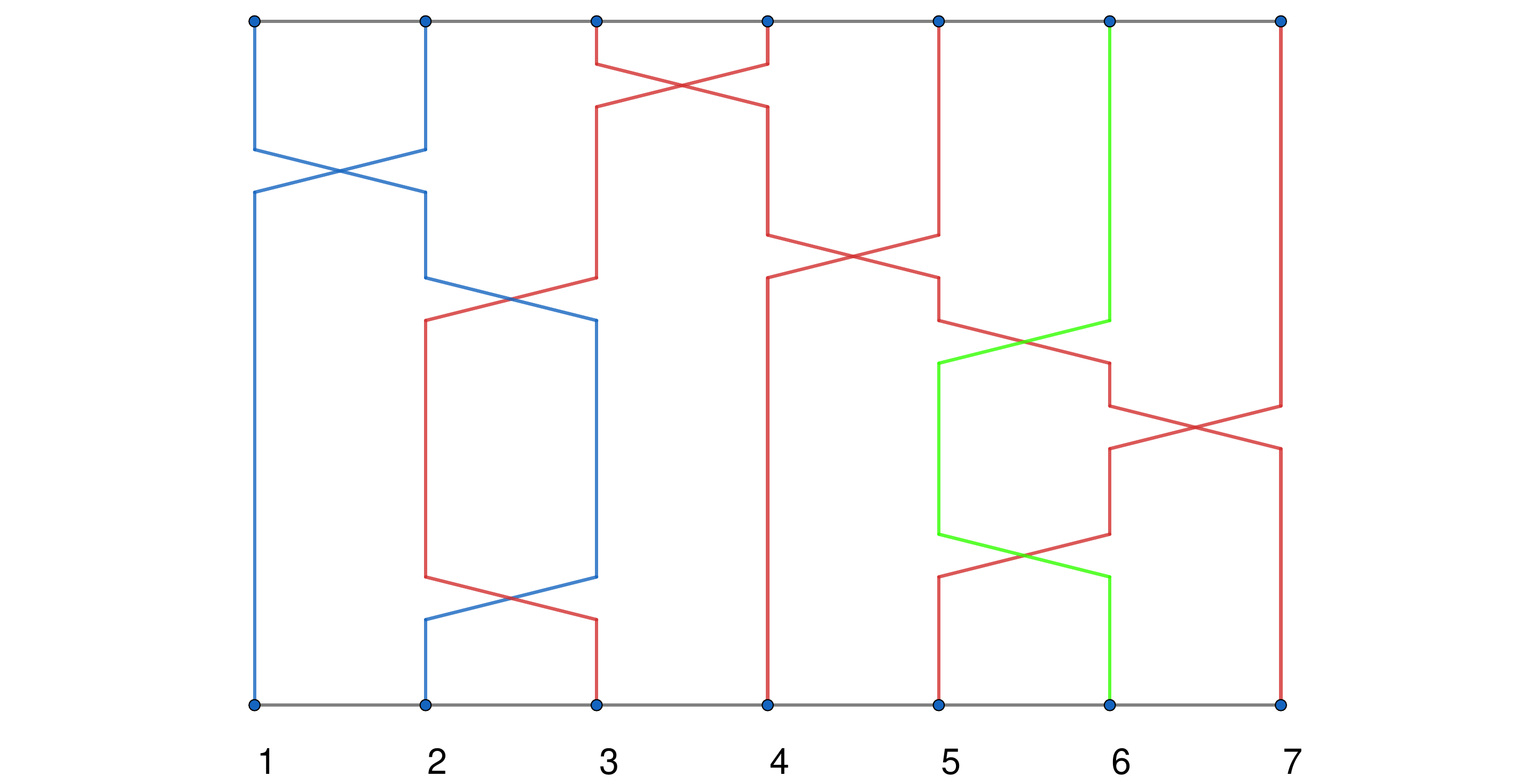}
\includegraphics[width=0.49\textwidth]{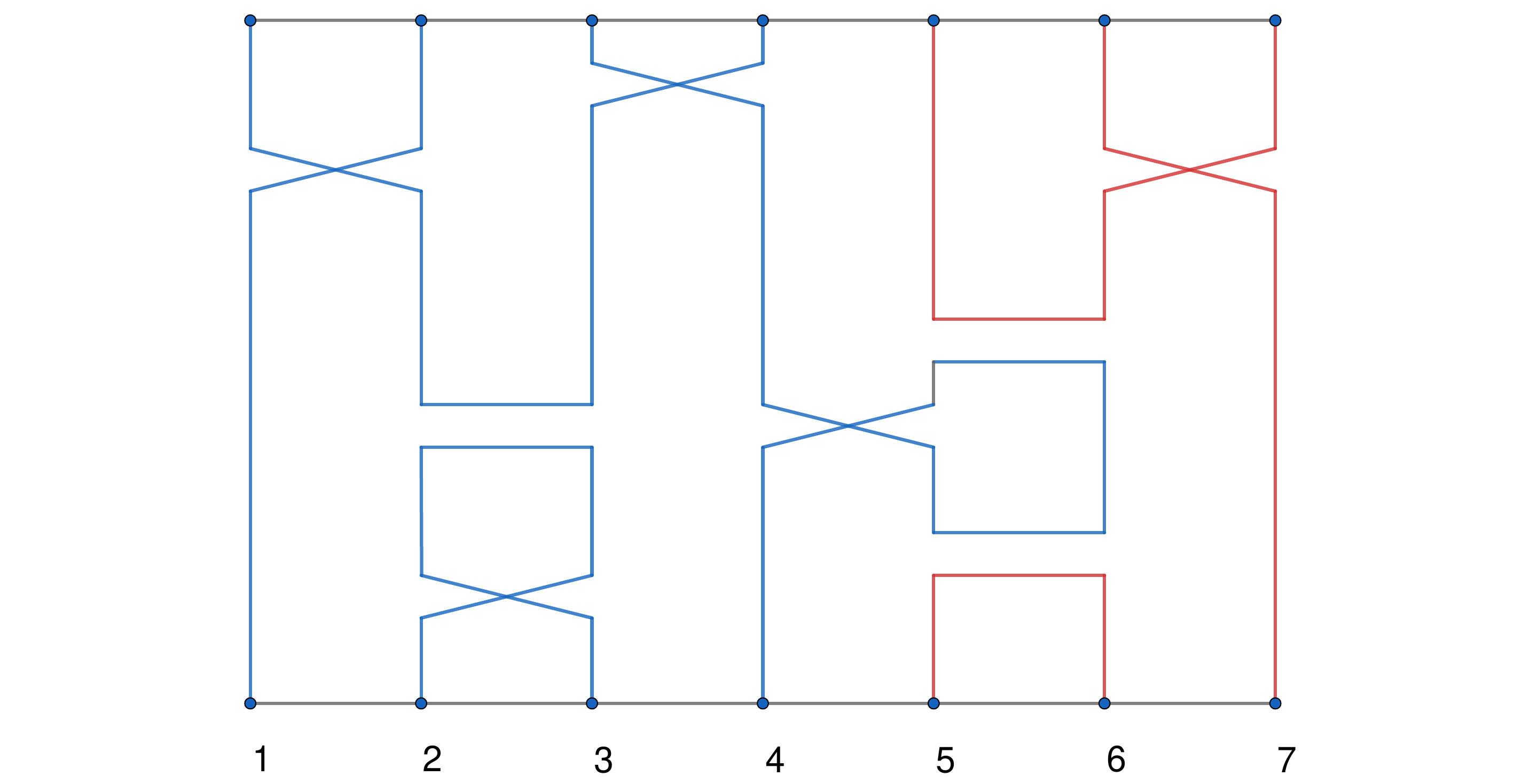}
\caption{Example loop configurations. On the left is an example of the case $u=1$ with only crosses. On the right is the case $u\in(0,1)$, where crosses and bars are both present.}
\label{fig: loop example}
\end{figure}

We denote the set of loops for a given configuration \( \omega \) by \( \mathcal{L}(\omega) \). On finite graphs, for \( \theta \in \mathbb{R}_{>0} \), the loop-weighted measure discussed in the introduction is defined by
\begin{equation*}
    \mathsf{loop}_{\beta,\theta}[G](\mathrm{d}\omega) \coloneqq \frac{1}{Z(G,\beta,u,\theta)} \, \theta^{\sharp \mathcal{L}(\omega)} \, \rho_{G,\beta,u}(\mathrm{d}\omega),
\end{equation*}
where \( Z(G,\beta,u,\theta) \) denotes the partition function, defined as
\begin{equation*}
    Z(G,\beta,u,\theta) \coloneqq \int_{\Omega(G)} \theta^{\sharp\mathcal{L}(\omega)} \,\rho_{G,\beta,u}(\mathrm{d}\omega)  .
\end{equation*}
When \( \theta \in \mathbb{N} \), this expression admits a natural interpretation after summing over colourings: each loop is uniformly assigned one of \( \theta \) distinct colours from the set \( [\theta] =\{1,\dots,\theta\}\).

We work in the case \( \theta = 1 \), for which the measure is simply \(\rho_{G,\beta,u}\) and is also defined on locally finite infinite graphs. Therefore, we simplify the notation to \( \mathsf{loop}_\beta[G] \).

Based on this framework, we define the concept of connection within the model. Two points \( (v,s) \) and \( (w,t) \) are said to be connected, denoted by \((v,s)\Leftrightarrow (w,t)\), if they are in the same loop. In particular, when considering the event that \( (v,0) \) is connected to the origin \( (o,0) \), we write this connection event as \( o \Leftrightarrow v \). Note that these conventions define \emph{two distinct} equivalence relations: one on $V\times[0,1)$ and one on $V$.

Let $L_o$ be the loop containing \( (o,0) \). By $|L_o|,$ we denote the total length of the loop, i.e.\ the total Lebesgue measure of the time the loop spends in each of its vertices. Note that jumps along links are considered instantaneous and are only represented as taking up positive time in the Figure~\ref{fig: loop example} for illustration purposes. Note that if $u=1$ and $|L_o|<\infty$ we have
\[
|L_o|=\sharp\{x:o\Leftrightarrow x\},
\]
i.e., the loop length corresponds precisely to the size of the permutation cycle induced by the crosses interpreted as transpositions of vertices along edges. We can define $L_x$ for other vertices $x\in V$ analogously. An \emph{infinite loop} is a loop of infinite length. Throughout, we use the convention \(\inf\emptyset=\infty\). We denote the critical parameter for loop percolation on $G$ or \emph{loop percolation threshold} of $G$ by
\begin{equation*}
    \beta^\textup{loop}_c(G) \coloneqq \inf\bigg\{\beta \geq 0 : \mathsf{loop}_\beta[G]\big(|L_o| = \infty\big) > 0 \bigg\}.
\end{equation*}
The infimum is well-defined by the convention \(\inf\emptyset=\infty\). The
loop parameter \(u\) is fixed throughout; when we need to emphasise this
dependence we write \(\beta^\textup{loop}_{c,u}(G)\). For connected \(G\), this
threshold does not depend on the choice of the root \( o \).
Indeed, if \(x\) and \(y\) are joined by a finite path, then by prescribing
links in small disjoint time intervals along the path and leaving the exterior
configuration unchanged, a positive-probability local event connects the loop
through \((x,0)\) to the loop through \((y,0)\). The same argument with \(x\) and
\(y\) interchanged transfers positive probability of an infinite root loop in
both directions. Note that $\beta^\textup{loop}_c(G)$ in general depends on $u$.

The loop measure induces another natural percolation structure where we obtain a random subgraph of $G$ by retaining edges with probability $1-e^{-\beta}$. This corresponds to \( \mathsf{loop}_\beta[G]\big(m_e \geq 1\big) \), recalling that \( m_e \) denotes the number of links on \( e \in E \). An edge is said to be \emph{retained} if there exists at least one link on it and it is said to be \emph{removed} otherwise. We denote the measure of this percolation model by \( \mathsf{link}_\beta[G] \) and refer to the corresponding percolation process as \emph{link percolation}.

We say that \( x \) and \( y \) are connected in the subgraph of $G$ obtained by $\mathsf{link}_\beta[G]$ if there exists a sequence of vertices \( (v_k)_{0 \leq k \leq K} \) such that \( v_0 = x \), \( v_K = y \), and \( \{v_k, v_{k+1}\}\in E \) is retained for every \( 0 \leq k < K \). This event is denoted by \( x \leftrightarrow y \). The connected component or \emph{cluster} of a vertex $x\in V$ with respect to link percolation on $G$ is defined as $C_x\coloneqq\{v\in V:x\leftrightarrow v\}$.

Furthermore, we write \( o \leftrightarrow \infty \) if the connected component of \( o \) under  \( \mathsf{link}_\beta[G] \) is infinite or, equivalently, there exists an infinite self-avoiding path of retained edges starting at $o$. The critical parameter for the link percolation model is defined as
\begin{equation*}
    \beta^\textup{link}_c(G) \coloneqq \inf\big\{\beta \geq 0 \,:\, \mathsf{link}_\beta[G]\big(o \leftrightarrow \infty\big) > 0\big\}.
\end{equation*}
Unlike $\beta^\textup{loop}_c(G)$, $\beta^\textup{link}_c(G)$ never depends on $u$.
\begin{remark}
Since $\rho_{G,\beta,u}$ is derived from an i.i.d.\ collection of Poisson
processes, the event that there exists an infinite link cluster is a tail event.
The analogous event for loops is also unchanged by altering finitely many edge
configurations: finite modifications can only split or merge finitely many loop
pieces, and hence cannot eliminate all infinite loops or create one from only
finite loops. Kolmogorov's $0$-$1$ law therefore implies that
\(\mathsf{link}_\beta[G](\exists x\in V: x \leftrightarrow \infty)\) and
\(\mathsf{loop}_\beta[G](\exists x\in V: |L_x|=\infty)\) are each either \(0\) or
\(1\). Together with countability and the root-independence observation above,
this shows that the positivity condition at \(o\) is equivalent to the
corresponding almost-sure existence condition somewhere in the graph. Thus we
could equivalently have defined the critical value(s) using the latter
expression(s).
\end{remark}

We now state our main Galton--Watson separation result.

\begin{theoremA}\label{thm:gw-finite-mean-main}
Fix \(u\in[0,1]\). Let \(T\) be a Galton--Watson tree with offspring law
\(\zeta=(\zeta_0,\zeta_1,\ldots)\), offspring variable \(Z\), \(Z<\infty\)
almost surely, and mean \(m=\E Z\in(1,\infty)\). Then, conditioned on survival,
\[
\beta_c^\mathsf{loop}(T)>\beta_c^\mathsf{link}(T)
=-\log(1-m^{-1})
\]
almost surely.
\end{theoremA}

Theorem~A follows from the stronger deterministic weighted pruning criterion,
Theorem~\ref{thm:weighted-pruning-criterion}, stated in
Section~\ref{sec:potential}. That criterion is more technical because it is
phrased in terms of boundary rays and the effect of a local pruning scheme
along those rays.

The next theorem is a partial converse for sufficiently heavy-tailed
Galton--Watson trees in the crosses-only model. Since the conclusion is an
annealed statement, write \(\mathbb E_\zeta\) for expectation over a
Galton--Watson tree \(T\) with offspring law \(\zeta\). We define the
\emph{annealed critical values} by
\[
\beta_{c,\mathrm{ann}}^\mathsf{loop}(\zeta)
\coloneqq
\inf\Big\{\beta\geq0:
\mathbb E_\zeta\big[\mathsf{loop}_\beta[T](|L_o|=\infty)\big]>0
\Big\},
\]
and
\[
\beta_{c,\mathrm{ann}}^\mathsf{link}(\zeta)
\coloneqq
\inf\Big\{\beta\geq0:
\mathbb E_\zeta\big[\mathsf{link}_\beta[T](o\leftrightarrow\infty)\big]>0
\Big\}.
\]
Thus the positivity condition is imposed after averaging over both the
Galton--Watson tree and the link or loop configuration, respectively.

\begin{theoremB}\label{thm:gw-heavy-tail-main}
Consider the crosses-only model \(u=1\). Let \(T\) be a Galton--Watson tree
with offspring law \(\zeta=(\zeta_0,\zeta_1,\ldots)\), offspring variable
\(Z\), \(Z<\infty\) almost surely, and probability generating function
\[
f(z)=\E z^Z,\qquad z\in[0,1].
\]
If
\[
\liminf_{\varepsilon\downarrow0}
\sqrt{\varepsilon}\,f'(1-\varepsilon)>\frac1{\sqrt2},
\]
then
\[
\beta_{c,\mathrm{ann}}^\mathsf{loop}(\zeta)
=
\beta_{c,\mathrm{ann}}^\mathsf{link}(\zeta)
=0.
\]
In particular, the displayed tail condition holds if, up to normalisation,
\(\zeta_k=k^{-\tau}L(k)\) for \(k\geq1\), where \(L\) is slowly varying, meaning that
\(L(cx)/L(x)\to1\) as \(x\to\infty\) for every \(c>0\), and
\(\tau\in(1,\frac32)\).
\end{theoremB}

If \(m=\E Z\leq1\), then the equality
\(\beta_c^\mathsf{loop}(T)=\beta_c^\mathsf{link}(T)=\infty\) holds for almost
every realised tree \(T\). The proofs of Theorems~A and~B are given in
Section~\ref{sec:applications-proofs}. The stronger deterministic theorem
implying Theorem~A is stated later in Section~\ref{sec:potential}.

We close this section with a quantitative local calculation illustrating why
local changes that are irrelevant for link percolation can still affect loop
exploration.

\begin{example}
Let \(T_d\) be the rooted \(d\)-ary tree, so every vertex has \(d\) children.
Then \(1-\exp(-\beta_c^\mathsf{link}(T_d))=1/d\). On this unmodified tree,
the results of Betz et al.\ \cite[Equation (2.3)]{betz_sharptrees} give, in
particular, \(\beta_c^\mathsf{loop}(T_d)=d^{-1}+O(d^{-2})\).

Now form \(T_d(S)\) by adding \(S\) pendant leaves to every vertex of \(T_d\).
This operation does not change the link critical value: an infinite link
cluster in \(T_d(S)\) exists if and only if the restriction of the link
configuration to the original copy of \(T_d\) contains an infinite connected
component.

The loop process is more sensitive. In the crosses-only case \(u=1\), a pendant
leaf with at least two links to its parent can send the trajectory from a
vertex \(x\) into the leaf and then return it to \(x\) at a later time before
the trajectory uses another edge of the original tree. For large \(S\), such
leaf excursions occur on a much shorter time scale than links on the original
tree and therefore act as a local time-randomisation mechanism.

The following calculation records the scale of this effect. Suppose the
exploration has entered a non-root vertex \(x\) from its parent. Let \(M\) be
the number of links on the parent edge of \(x\), and let \(R\) be the total
number of links on the \(d\) child edges of \(x\) in the original tree. Before
conditioning, \(M\) and \(R\) are independent Poisson random variables with
means \(\beta\) and \(\beta d\), respectively. Conditional on \(M\geq1\) and
\(R\geq1\), the idealised local picture in which the pendant-leaf excursions
fully resample the return time gives the backtracking factor
\begin{equation*}
\begin{aligned}
A_d(\beta)
&\coloneqq
\frac{1}{(1-e^{-\beta})(1-e^{-\beta d})}
\sum_{m,r\geq1}
\bigg(\frac{m}{m+r}\bigg)^m
\mathbb{P}(M=m)\mathbb{P}(R=r)  \\
&\geq
\frac{1}{(1-e^{-\beta})(1-e^{-\beta d})}
\sum_{m,r\geq1}
e^{-r}\mathbb{P}(M=m)\mathbb{P}(R=r) \\
&=
\frac{e^{-\beta d(1-e^{-1})}-e^{-\beta d}}{1-e^{-\beta d}}.
\end{aligned}
\end{equation*}
Indeed, the inequality uses
\((m/(m+r))^m=(1+r/m)^{-m}\geq e^{-r}\). Consequently
\[
\liminf_{\beta\downarrow0} A_d(\beta)\geq e^{-1}.
\]
While this is only a local backtracking calculation, it nevertheless shows
quantitatively why the link cluster alone does not determine the effective
transition probabilities seen by loop exploration, and hence why simple
monotonicity statements for loop critical values under local tree
modifications are delicate.
\end{example}

\section{Coupling to inhomogeneous Bernoulli percolation}\label{sec:coupling}

From now on, we consider only rooted trees $(T,o)$, with $T=(V,E)$ and $o\in V$. Fix $x\in V\setminus\{o\}$. We write $|x|$ for the graph distance in $T$ of $x\in V$ to the root $o$; we write $a(x)$ for the \emph{parent} of $x$, i.e.\ the unique neighbour $y$ of $x$ with $|y|=|x|-1$; and we write $e_x=\{x,a(x)\}$ for the edge connecting $x$ to its parent. The latter definition induces a bijection $V\setminus\{o\}\to E$ and we denote the corresponding inverse by $x:E\to V\setminus\{o\}$, i.e.\ $x(e)$ is the vertex of $e$ further from the root. We also frequently suppress the edge set and the vertex set in the notation and just write $x\in T$ for vertices $x$ or $e\in T$ for edges $e$.

Our method to prove that the critical values differ relies on a coupling of $\mathsf{loop}_\beta[T]$ to variants of simplified percolation models. We keep the loop parameter \(u\in[0,1]\) fixed and separate the local deletion probabilities for \(u>0\) and for the bar-only case \(u=0\).

\begin{definition}
Fix \(u\in[0,1]\). For admissible \(d\geq2\), \(d^\ast\geq1\), and
\(\lambda>0\), let \(R_u(d,d^\ast,\lambda)\) be given, for \(u\in(0,1]\), by
\begin{equation}\label{eq:blockingprobabilities}
R_u(d,d^\ast,\lambda)=\frac{\lambda^3 \textup{e}^{-2\lambda}u^2}{2(1-\textup{e}^{-\lambda})^2}\int_{0}^1\int_0^1\Big( \frac{\textup{e}^{-\lambda}(\textup{e}^{\lambda (1-|s-t|)}-1)}{1-\textup{e}^{-\lambda}} \Big)^{d^\ast -1}\Big( \frac{\textup{e}^{-\lambda}(\textup{e}^{\lambda |s-t|}-1)}{1-\textup{e}^{-\lambda}}\Big)^{d -2}\,\textup{d}s\,\textup{d}t,
\end{equation}
and, for \(u=0\), by
\begin{equation}\label{eq:blockingprobabilities2}
R_0(d,d^\ast,\lambda)=\frac{\lambda^3 \textup{e}^{-2\lambda}}{2(1-\textup{e}^{-\lambda})^2}\int_{0}^1\int_0^1\Big( \frac{\textup{e}^{-\lambda}(\textup{e}^{\lambda |s-t|}-1)}{1-\textup{e}^{-\lambda}} \Big)^{d+d^\ast -3}\,\textup{d}s\,\textup{d}t.
\end{equation}
Let $(T,o)$ be a fixed rooted tree and define, for a vertex with at least one child,
$$D^\ast(x)=D_T^\ast(x)=\min\{\deg_T(y): a(y)=x\}.$$
Vertices without children are removed with probability \(0\). Remove vertices
\(x\in T\setminus\{o\}\) with at least one child independently with probability
\(R_u(\deg_T(x),D^\ast(x),\lambda)\). We call the resulting independent site
percolation model on $(T,o)$ \emph{pruning percolation}, denoted by
$\mathsf{prune}_\lambda[T]$. A related model, which we dub \emph{delayed
pruning percolation} and denote by $\mathsf{delay}_\lambda[T]$ is obtained by
replacing these deletion probabilities by
\[
\1_{\{|x|\equiv1\!\!\!\pmod 3\}}R_u(\deg_T(x),D^\ast(x),\lambda),
\]
for vertices \(x\) with at least one child, and by deletion probability \(0\)
for vertices without children.
\end{definition}
The formulas \eqref{eq:blockingprobabilities} and
\eqref{eq:blockingprobabilities2} are derived from the local pruning-edge
calculation in the proof of Lemma~\ref{lem:coupling1}.
Pruning percolation allows any non-root vertex with at least one child to be
deleted, while delayed pruning only allows such deletions at levels congruent
to \(1\) modulo \(3\). We mainly use the delayed variant because nearby local loop
events need not be independent. The level spacing makes the edge sets used by
the selected local pruning events disjoint, as checked in the proof of
Lemma~\ref{lem:coupling1}. This coupling refines the idea employed in
\cite{muehlbacher_critical_2021}.

\begin{definition}\label{def:blockingedges}
    For two times \(s,t\in[0,1)\), set
    \(H(s,t)=(s\wedge t,s\vee t)\) and
    \(J(s,t)=[0,1)\setminus [s\wedge t,s\vee t]\). Endpoints carry the two
    links under discussion and are not part of the exposed intervals. Consider a
    realisation $\omega$ of $\mathsf{loop}_\beta[T]$. For a point
    $(z,t)\in V\times[0,1)$ begin an exploration of the loop structure by moving
    in the positive time direction. If \(u>0\), we call an edge
    $e=\{y,x(e)\}$ encountered during this exploration \emph{pruning for}
    $\omega$ \emph{and} $(z,t)$ if we encounter precisely two crosses on $e$, at
    times \(s_1,s_2\in I(e)\), and while traversing the \emph{exposed
    intervals} \(x(e)\times J(s_1,s_2)\) and \(y\times H(s_1,s_2)\) we do not
    encounter any links. If \(u=0\), all links are bars, and we call \(e\)
    pruning if the exploration encounters precisely two bars on \(e\), at times
    \(s_1,s_2\), and the exposed intervals \(x(e)\times J(s_1,s_2)\) and
    \(y\times J(s_1,s_2)\) contain no links. If $e$ is pruning for $\omega$ and
    $(o,0)$, we simply call it \emph{pruning}.
\end{definition}
The times \(s_1\) and \(s_2\) are defined by the exploration of the loop; the
notation above orders them only to specify the two complementary time intervals.
In particular, \(J(s_1,s_2)\) is the complementary arc on the time circle and is
written as two intervals after cutting the circle at \(0\). Suppose that
\(x\ne o\), that \(a(x)\) is the parent of \(x\), and that the exploration enters
\(x\) through the parent edge \(\{a(x),x\}\). If a child edge \(\{x,y\}\) is
pruning, then this visit crosses to \(y\), is forced by the exposed interval at
\(y\) to cross back to \(x\), and is then forced by the exposed interval at
\(x\) to return to the parent edge before entering another descendant edge. In
the coupling below this local event is used together with the event that the
parent edge of \(x\) carries precisely one link. Hence a later return from the
parent side would have to use the same parent link and would return to the
already exposed local segment. Lemma~\ref{lem:coupling1} makes this connection
precise: the descendant subtree below such an \(x\) is pruned with probability
determined by \(R_u\), except for a finite local remnant that cannot contain an
infinite descendant branch.
\begin{remark}
    The important difference between `blocking' edges as used in \cite{muehlbacher_critical_2021} and pruning edges is that a blocking edge just prevents loops from crossing it, whereas a pruning edge cuts off infinite descendant branches from \(L_o\), up to the finite remnants discussed above; see Figure~\ref{fig:prune} below. Although, in the tree case, the effect on the overall geometry is comparable, our construction has the advantage of preserving more independence between adjacent pruning events at vertices having the same parent than using blocking edges. Also note that we include $u=0$, whereas \cite{muehlbacher_critical_2021} does not.
\end{remark}

\begin{figure}
    \centering
 \includegraphics[height=6cm,width=0.32\textwidth]{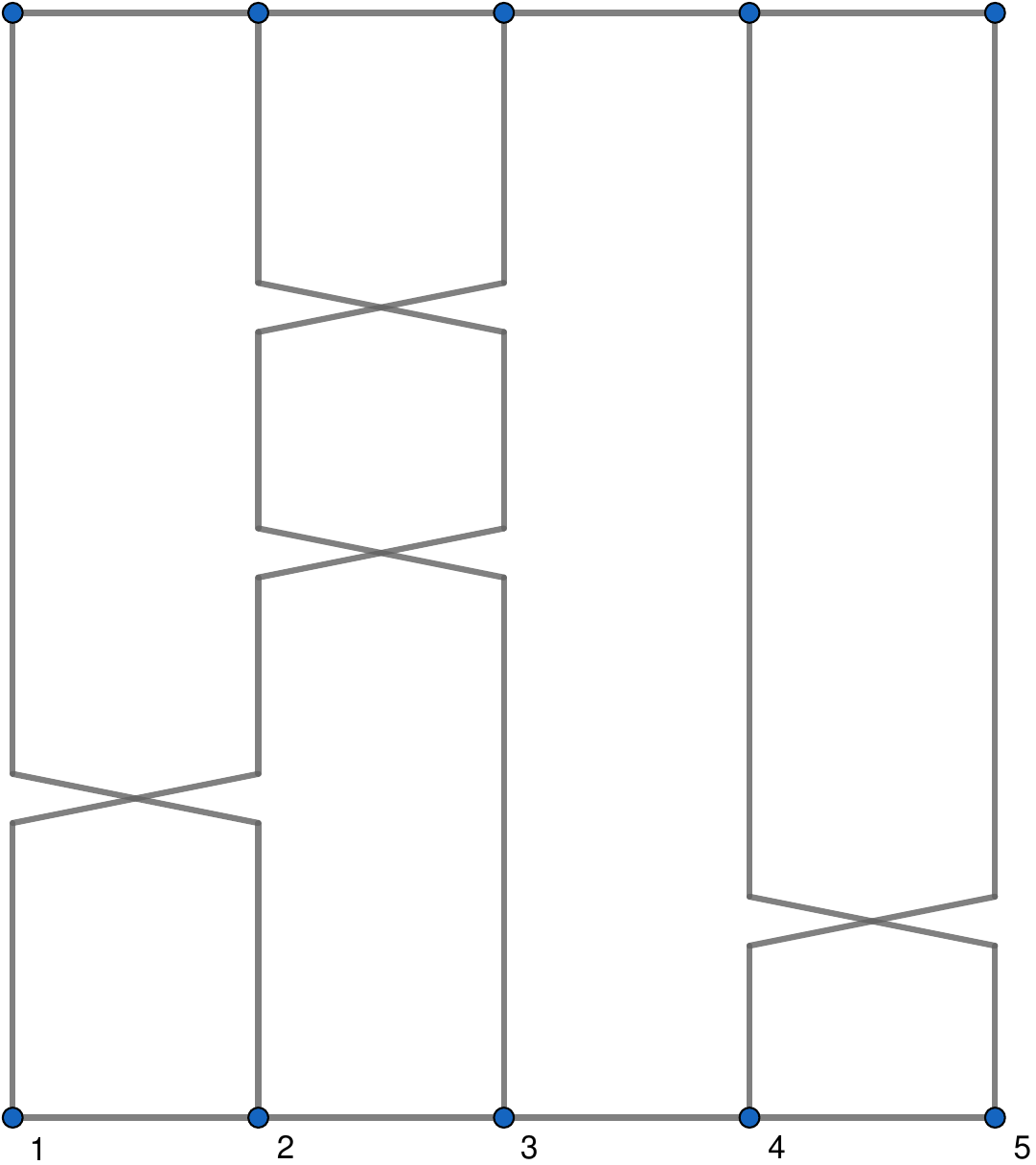}
\includegraphics[height=6cm,width=0.32\textwidth]{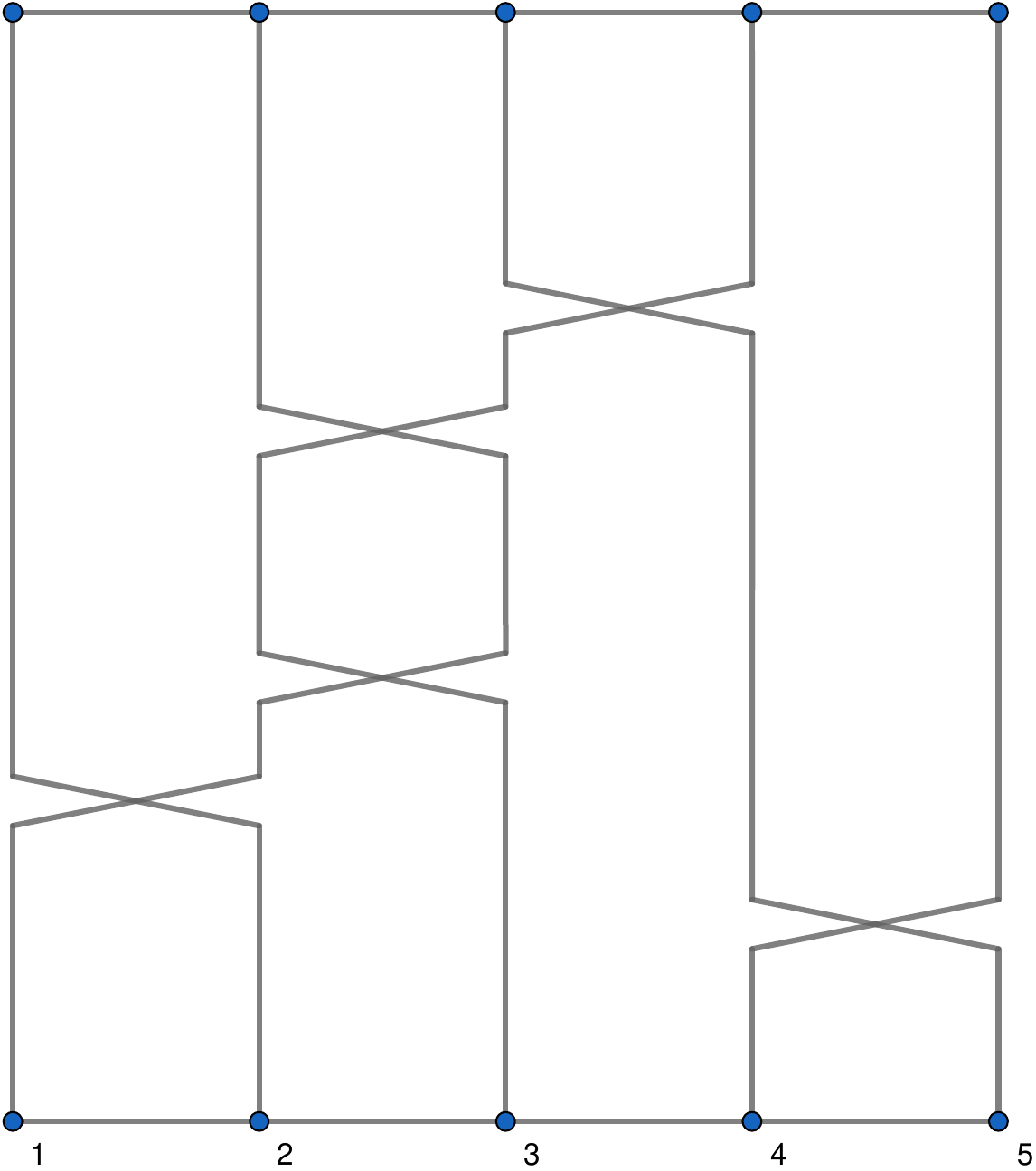}
\includegraphics[height=6cm,width=0.32\textwidth]{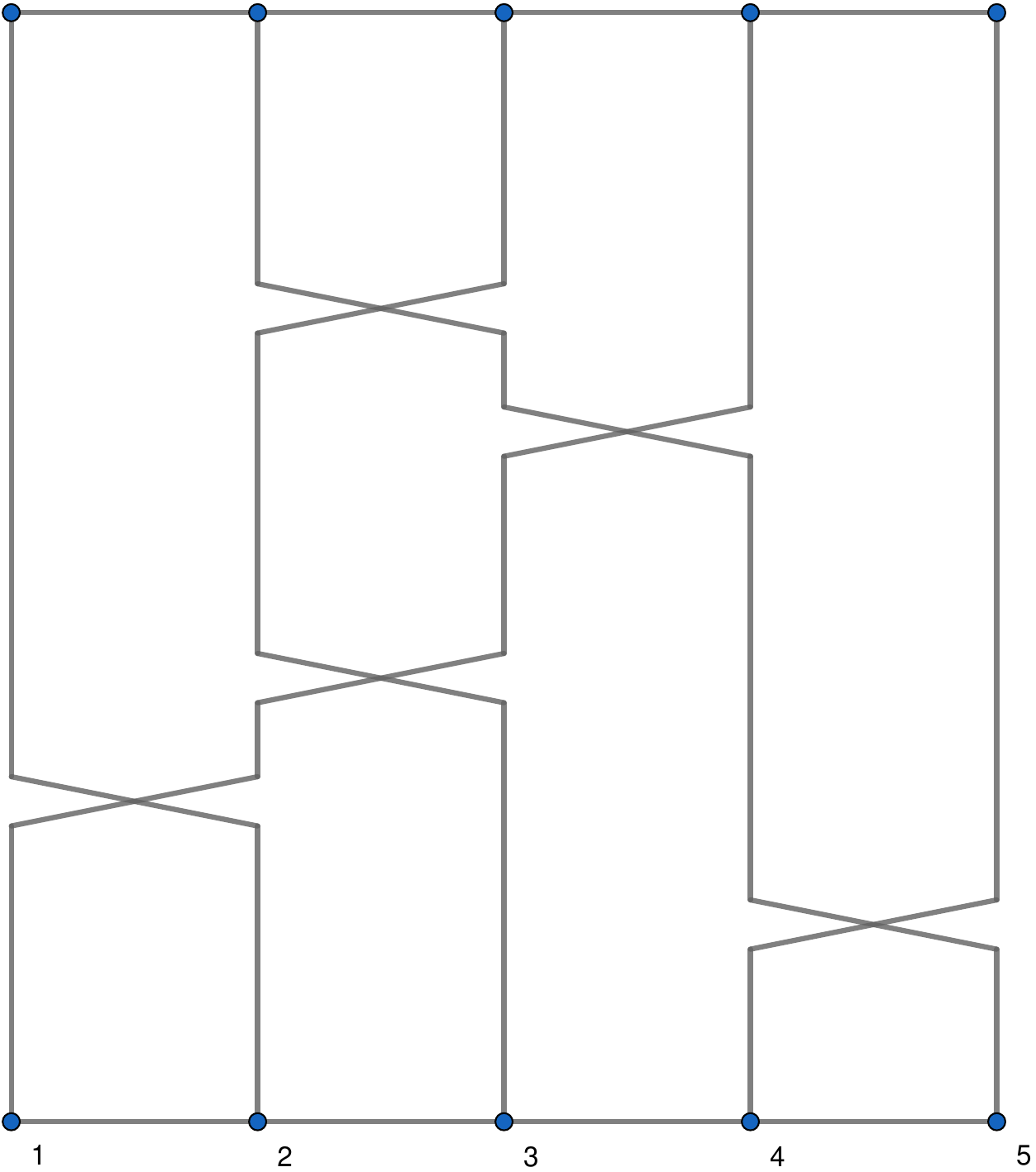}
\includegraphics[width=0.8\textwidth]{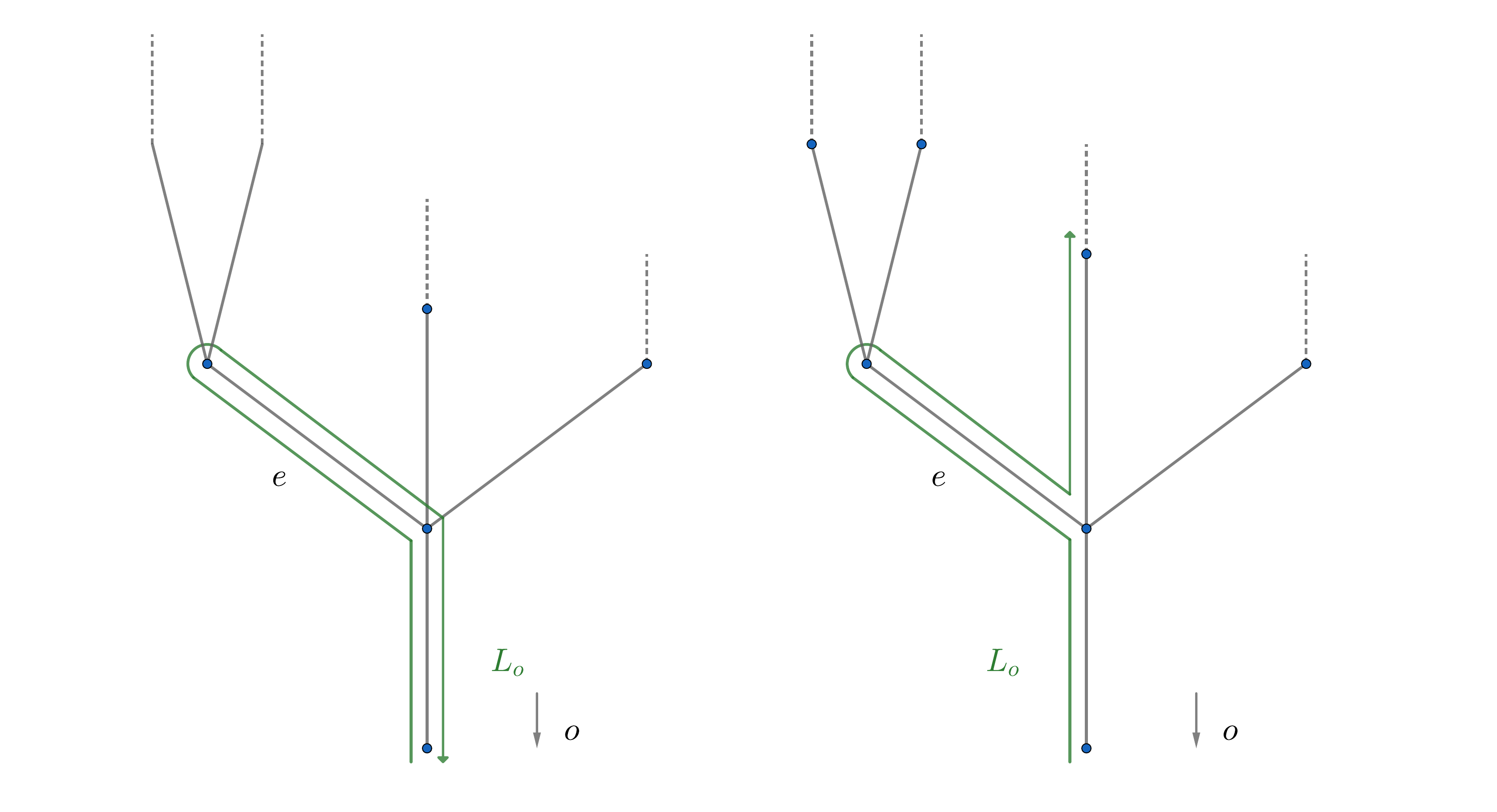}
\caption{Top: examples of an edge ($\{2,3\}$) and three link configurations so that the edge is pruning, blocking but not pruning, and neither pruning nor blocking, respectively. Bottom left: the effect of a pruning edge $e$ on the path of $L_o$ in $T$. Bottom right: the effect of a blocking edge $e$ on the path of $L_o$ in $T$.}
\label{fig:prune}
\end{figure}

To formalize the coupling between pruning edges and pruning percolation, we use
only the comparison needed for percolation. For two random rooted subgraphs of
the same base tree, write \(\mathsf A\preceq_\infty\mathsf B\) if they can be
coupled so that every infinite branch in the root component of \(\mathsf A\)
forces an infinite branch in the root component of \(\mathsf B\). When
\(\mathsf A=\mathsf{loop}_\beta[T]\), the root component means the set of
vertices visited by \(L_o\). Finite local remnants left by pruning events are
irrelevant for this comparison because they cannot contain an infinite branch.

Additionally, for percolation processes $\mathsf{A},\mathsf{B}$ on a base graph $G$, we write $\mathsf{B}\circ \mathsf{A}[G]$ for the distribution of a percolation configuration obtained by first percolating $G$ with $\mathsf{A}$ and then using the obtained subgraph of $G$ as input for the percolation process $\mathsf{B}$.
\begin{lemma}\label{lem:coupling1}
For any tree, we have
\[
\mathsf{loop}_\beta[T]\preceq_\infty
\mathsf{delay}_\beta\circ \mathsf{link}_\beta[T]\preceq_\infty
\mathsf{link}_\beta[T].
\]
Here the pruning probabilities in $\mathsf{delay}_\beta$ are computed in the
link cluster obtained in the first step.
\end{lemma}
\begin{proof}
The second relation is immediate, since delayed pruning only removes vertices
from the link cluster. We prove the first relation directly, without comparing
the two possible orders of link and pruning percolation.

For any edge $e\in E$, let $A_1(e)$ denote the event that $e$ carries precisely
one link. We now work conditionally on the link-cluster $C_o$ of $o$ in $T$.
Fix any edge $e\in E(C_o)$ with endpoint $x(e)$ and let $N$ denote the set of
offspring of $x(e)$ in $C_o$, i.e.\ the vertices neighbouring $x(e)$ that are
further away from the root than $x(e)$. Assume for the moment that \(N\) is
nonempty. Let \(y^\ast\) denote a fixed vertex in \(N\), and denote its degree
by \(D^\ast\). Let
\[
B(e,y^\ast)=\big\{\{x(e),y^\ast\}\text{ is pruning}\big\}.
\]
For \(s\in[0,1]\), set
\[
        Q_\beta(s)
        =
        \frac{\textup{e}^{-\beta}}{1-\textup{e}^{-\beta}}
        \sum_{k=1}^\infty \frac{\beta^k s^k}{k!}
        =
        \frac{\textup{e}^{-\beta}(\textup{e}^{\beta s}-1)}
             {1-\textup{e}^{-\beta}}.
\]
First suppose \(u>0\). Then, for \(D=\textup{deg}_{C_o}(x(e))\geq 2\), and for independent
\(\operatorname{Uniform}[0,1)\) variables \(U_1,U_2\),
\begin{align*}
&\mathsf{loop}_\beta[T]\big(A_1(e)\cap B(e,y^\ast)\mid C_o\big)\\
&\quad =
   \frac{\beta \textup{e}^{-\beta}}{1-\textup{e}^{-\beta}}\,
   \frac{\beta^2\textup{e}^{-\beta}u^2}{2(1-\textup{e}^{-\beta})}\,
   \E\Big[
        Q_\beta\big(U_1\wedge U_2+1-U_1\vee U_2\big)^{D^\ast-1}
        Q_\beta\big(U_1\vee U_2-U_1\wedge U_2\big)^{D-2}
      \Big].
\end{align*}
The two prefactors are the conditional probabilities that \(e\) carries
precisely one link and that \(\{x(e),y^\ast\}\) carries precisely two links
(both of them crosses). These events are independent of each other and of the link status of the
remaining edges adjacent to \(x(e)\) and \(y^\ast\). The two factors in the
expectation correspond to the events that links on the remaining adjacent edges
fall into the complements of the exposed intervals. Conditionally on the
exposed intervals, these link events are all independent. Finally, on the event
\(A_1(e)\), we may calculate the positions of the exposed intervals relative
to the position of the single link on \(e\), which gives the two complementary
lengths displayed above.

Gathering terms and setting $\ell= U_1\wedge U_2+1-U_1\vee U_2$ yields
\begin{equation*}
\mathsf{loop}_\beta[T]\big(A_1(e)\cap B(e,y^\ast)\mid C_o\big)
=
\frac{\beta^3 \textup{e}^{-2\beta}u^2}{2(1-\textup{e}^{-\beta})^2}
\E\Big[
    Q_\beta(\ell)^{D^\ast-1} Q_\beta(1-\ell)^{D-2}
  \Big],
\end{equation*}
which is \(R_u(D,D^\ast,\beta)\), the deletion probability used by
\(\mathsf{delay}_\beta[C_o]\) at \(x(e)\).

If \(u=0\), the same conditioning is used but the two links on
\(\{x(e),y^\ast\}\) are bars. Crossing a bar reverses the vertical direction,
so after the first crossing of the child edge the exploration traverses the
complementary interval until it reaches the second bar, and after crossing back
it again traverses the complementary interval at \(x(e)\) before returning to
the parent edge. Thus the exposed intervals on both sides have length \(\ell\),
and all links on the remaining adjacent edges must fall in the complementary
allowed interval of length \(1-\ell=U_1\vee U_2-U_1\wedge U_2\). These edges
therefore contribute
\[
Q_\beta(1-\ell)^{D+D^\ast-3}.
\]
This gives exactly \(R_0(D,D^\ast,\beta)\), the bar-only probability in
\eqref{eq:blockingprobabilities2}.

Finally, for every vertex \(x\in C_o\) with \(|x|\equiv1\pmod3\) and at least
one child in \(C_o\), choose a child \(y^\ast(x)\) of minimal degree in
\(C_o\), using independent auxiliary marks to break ties. The event attached to
\(x\) uses only the parent edge \(e_x\), the chosen child edge
\(\{x,y^\ast(x)\}\), and the other edges incident to \(x\) and to
\(y^\ast(x)\); it does not use the other edges incident to the parent \(a(x)\).
If \(x\) and \(z\) are two distinct selected vertices, then these edge sets are
disjoint. Indeed, if one selected vertex is an ancestor of the other, their
levels differ by at least \(3\), so the upper event reaches at most to edges
between levels \(|x|+1\) and \(|x|+2\), while the lower event starts with its
parent edge. If neither selected vertex is an ancestor of the other, the two
local edge sets lie in disjoint branches; sibling vertices cause no overlap
because their events do not use the unused edges incident to their common
parent. Conditional on \(C_o\) and on the auxiliary marks, we have only
conditioned each edge on either carrying no link or carrying at least one link,
so the Poisson configurations on disjoint edge sets remain independent.
Therefore the corresponding pruning events can be coupled with the independent
deletions in \(\mathsf{delay}_\beta[C_o]\). Under this coupling, if a selected
vertex \(x\) is removed, the event \(A_1(e_x)\) and the pruning child edge force
the exploration from the parent side back to the unique parent-edge link before
any other child edge of \(x\) can be used. Any later return from the parent side
uses that same link and returns to the already exposed local segment.
Consequently, below such a removed \(x\), the loop contributes only a finite
local remnant; every infinite descendant branch through \(x\) is cut off.

\end{proof}
We close this section by stating some elementary properties of the local pruning
probability \(R_u\).
\begin{prop}\label{prop:obvious}
Fix $u\in[0,1]$. For admissible pairs \(d\geq2\), \(d^\ast\geq1\),
the function \(R_u(d,d^\ast,\lambda)\) is non-increasing in \(d\) and
\(d^\ast\) and continuous in \(\lambda\in(0,\infty)\). We furthermore have
\[
R_u(d,d^\ast,\lambda)>0 \quad \text{ for all }d\geq2,\ d^\ast\geq1,\ \lambda\in (0,\infty),
\]
and
\[
\lim_{\lambda\to 0}R_u(d,d^\ast,\lambda)=\lim_{\lambda\to \infty}R_u(d,d^\ast,\lambda)=\lim_{d\to \infty}R_u(d,d^\ast,\lambda)=\lim_{d^\ast\to \infty}R_u(d,d^\ast,\lambda)=0.
\]
\end{prop}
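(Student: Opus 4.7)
The plan is to verify each of the four assertions directly from the explicit representations \eqref{eq:blockingprobabilities} and \eqref{eq:blockingprobabilities2}. The first step is to relabel the dummy integration variables in those formulas to $s,t\in[0,1]$ (to avoid the clash with the model parameter $u$) and introduce the shorthands
\[
A(\ell,\lambda)=\frac{\textup{e}^{-\lambda\ell}-\textup{e}^{-\lambda}}{1-\textup{e}^{-\lambda}},\qquad B(\ell,\lambda)=\frac{\textup{e}^{-\lambda(1-\ell)}-\textup{e}^{-\lambda}}{1-\textup{e}^{-\lambda}},
\]
with $\ell=|s-t|$, so that the integrand in \eqref{eq:blockingprobabilities} becomes $A^{d^\ast-1}B^{d-2}$. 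A direct inspection shows $A(\ell,\lambda),B(\ell,\lambda)\in[0,1]$ throughout, with strict membership $A,B\in(0,1)$ as soon as $\ell\in(0,1)$, while the outer prefactor $P(\lambda)=\lambda^3\textup{e}^{-2\lambda}u^2/(2(1-\textup{e}^{-\lambda})^2)$ is smooth and strictly positive on $(0,\infty)$.

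With these observations in hand, monotonicity in $d$ and $d^\ast$ is immediate from the pointwise monotonicity of $x\mapsto x^n$ on $[0,1]$: increasing either exponent makes the integrand pointwise smaller, hence the integral smaller. Strict positivity for $\lambda>0$ follows because $P(\lambda)>0$ and the integrand is continuous and strictly positive on the open set $\{(s,t)\in[0,1]^2:0<|s-t|<1\}$, which has positive Lebesgue measure.

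For continuity in $\lambda$ and the four limits, I would invoke dominated convergence with the constant function $1$ as dominant, which is legitimate because $A,B\in[0,1]$. Continuity on $(0,\infty)$ then follows from the continuity of $P$ and of the integrand in $\lambda$. For the two $\lambda$-limits I would combine the integral bound $1$ with the elementary asymptotics $P(\lambda)\sim (u^2/2)\lambda$ as $\lambda\to 0$ and $P(\lambda)=\mathcal{O}(\lambda^3\textup{e}^{-2\lambda})$ as $\lambda\to\infty$. For $d\to\infty$ (respectively $d^\ast\to\infty$), the factor $B^{d-2}$ (respectively $A^{d^\ast-1}$) tends to $0$ pointwise on the open set $\{0<|s-t|<1\}$, so dominated convergence gives the claim. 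The case $u=0$ is handled identically using \eqref{eq:blockingprobabilities2}, since the single factor $A^{\deg_T(x)+D^\ast(x)-3}$ has a base in $[0,1]$ with the same strict-inequality behaviour on the interior.

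I do not anticipate any serious obstacle: the statement is really a collection of routine consequences of the integral representation. The only point requiring mild care is checking that the boundary $\{s=t\}\cup\{|s-t|=1\}$ of the integration domain (on which the strict inequalities for $A,B$ fail) is a Lebesgue null set, so that its contribution does not spoil the dominated convergence argument in the $d,d^\ast\to\infty$ limits.
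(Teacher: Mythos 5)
Your proposal is correct and follows the same route as the paper, whose proof consists of the single remark that the claims follow immediately upon inspecting the definition of $r_\lambda(x)$; you simply carry out that inspection explicitly (bases in $[0,1]$, positive prefactor, dominated convergence for continuity and the limits). No gaps relative to the paper's argument.
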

\begin{proof}
Write
\[
        Q_\lambda(s)=
        \frac{\textup{e}^{-\lambda}(\textup{e}^{\lambda s}-1)}
             {1-\textup{e}^{-\lambda}}
        =
        \frac{\textup{e}^{\lambda s}-1}{\textup{e}^{\lambda}-1},
        \qquad 0\leq s\leq 1 .
\]
Then \(0\leq Q_\lambda(s)\leq1\), with \(Q_\lambda(s)<1\) for \(s<1\) and
\(Q_\lambda(s)>0\) for \(s>0\). Increasing \(d\) or \(d^\ast\) only increases
one of the exponents of a factor lying in \([0,1]\), so \(R_u\) is
non-increasing in both degree variables.

For fixed admissible \(d,d^\ast\), the prefactor is continuous in
\(\lambda>0\), and the integrand is bounded by \(1\) and continuous in
\(\lambda\). Dominated convergence gives continuity. Positivity follows
because the prefactor is positive and the integrand is positive on the
positive-measure set \(\{(s,t):0<|s-t|<1\}\).

The limits in \(d\) and \(d^\ast\) follow again from dominated convergence:
as the relevant exponent tends to infinity, the integrand tends to \(0\) for
Lebesgue-a.e. \((s,t)\). Finally,
\[
        \frac{\lambda^3\textup{e}^{-2\lambda}}
             {2(1-\textup{e}^{-\lambda})^2}
        \longrightarrow 0
        \quad\text{as }\lambda\downarrow0
        \quad\text{and as }\lambda\uparrow\infty,
\]
while the integrals are bounded by \(1\). This proves the two limits in
\(\lambda\).
\end{proof}

\section{Potential theory and percolation on trees}\label{sec:potential}
We call an infinite self-avoiding path \(\xi=(o,\xi_1,\xi_2,\dots)\) in \(T\) a \emph{ray}. The set of rays is the \emph{boundary} \(\partial T\) of \(T\). This agrees with the usual end-boundary, since two rays starting at the root of a tree represent the same end only when they are identical. For \(\xi,\xi'\in\partial T\), define \(\xi\wedge \xi'\) to be the vertex furthest away from the root that is common to both \(\xi\) and \(\xi'\). Then \((\xi,\xi')\mapsto\textup{e}^{-|\xi\wedge\xi'|}\) defines a metric on \(\partial T\) with respect to which the boundary forms a complete separable metric space. As in the introduction, we write \(\dim(X)\) for the Hausdorff dimension of a metric space \(X\).

To study more general trees than Galton--Watson trees, we use harmonic analysis
on trees and its connection with percolation, developed in
\cite{lyons_percolation_1990,lyons_capacity_1992,lyons_ergodicGW_1995}. Our
presentation relies mostly on the monograph \cite{lyons_probability_2017}.
Consider a rooted infinite tree \((T,o)\). A function
\(g:T\to[0,\infty)\) on the vertices of \(T\) is called a \emph{gauge} if it
is non-decreasing along rays. By monotonicity, we may extend any gauge to a
function \(\partial T\to [0,\infty]\), which we also denote by \(g\). A
gauge \(g\) gives rise to a \emph{kernel}
\(\mathscr{k}_g:\partial T\times\partial T\to [0,\infty]\) via
\[
\mathscr{k}_g(\xi,\xi')=g(\xi\wedge\xi').
\]
By $\Delta g$ we denote the function $g(x)-g(a(x))\geq 0, x\neq o$, i.e.\ the gauge difference between \(x\) and its parent (not to be confused with the graph Laplacian). We set \(\Delta g(o)=g(o)\) for the root term in the potential sum below. We may think of \(\Delta g\) as a cost or resistance encountered when traversing the edge \(\{a(x),x\}\) from the root. This interpretation can be made rigorous using the electrical network formalism; see \cite{lyons_probability_2017}. Consequently, \(\Delta g\) can also be interpreted as a function of the edges in \(T\), which is occasionally convenient. Recall that the boundary \(\partial T\) of \(T\) carries a metric structure, so we can define
\[
\Pi(\partial T)=\{\nu : \nu \text{ is a Borel measure on }\partial T \text{ with }\nu(\partial T)=1\}.
\]
The \emph{potential} of $\nu\in\Pi(\partial T)$ with respect to the gauge $g$ is defined as
\[
\mathscr{h}_{g,\nu}(\xi)=\int_{\partial T} \mathscr{k}_g(\xi,\xi')\nu(\textup{d}\xi'),\quad \xi\in \partial T,
\]
and the corresponding \emph{energy} is
\[
\mathscr{E}_g(\nu)=\int_{\partial T\times \partial T} \mathscr{k}_g(\xi,\xi')\nu\otimes\nu(\textup{d}\xi,\textup{d}\xi')=\int_{\partial T} \mathscr{h}_{g,\nu}(\xi)\nu(\textup{d}\xi).
\]
The following alternative representation of the potential was established in
\cite{lyons_percolation_1990}; see also
\cite[Prop. 16.1]{lyons_probability_2017}:
\begin{equation}\label{eq:potentialsum}
\mathscr{h}_{g,\nu}(\xi)=\sum_{x\in \xi}\Delta g(x)\nu(b_x),
\end{equation}
where the ball $b_x\subset \partial T$ around $x\in T$ consists of those rays $\xi\in \partial T$ that contain $x$.

Finally, the \emph{\(g\)-capacity} of a Borel set $A\subset \partial T$ is the quantity
\[
\mathscr{C}_g(A)=\inf\{ \mathscr{E}_g(\nu): \nu\in\Pi(\partial T), \,\nu(A)=1 \}^{-1}.
\]
The connection of these potential theoretic notions with independent Bernoulli percolation is as follows: any independent Bernoulli percolation on $T$ can be parametrised through its edge retention probabilities
\[
p(e), e\in T,
\]
which gives rise to a gauge function $g_p$ via $g_p(o)=1$ and
\[
g_p(x)=\prod_{o<v\leq x}p(e(v))^{-1}, \quad x\in T\setminus\{o\},
\]
where $v<x$ if $v$ is on the unique path connecting $x$ to $o$ in $T$. Note that $g_p(x)^{-1}$ corresponds precisely to the probability that $x$ can be reached from $o$ in a realisation of bond percolation with edge retention probabilities given by $p=(p(e),e\in T)$. We call $g_p$ the gauge \emph{adapted} to the independent percolation $(p(e),e\in T)$.

Lyons' capacity criterion \cite{lyons_capacity_1992} now implies that
\begin{equation}\label{eq:precviacap}
\mathscr{C}_{g_p}(\partial T) \leq \P(\sharp C_o=\infty \text{ under the percolation } p)\leq 2 \mathscr{C}_{g_p}(\partial T),
\end{equation}
where \(C_o\) is the cluster of \(o\) obtained by performing
\(p\)-Bernoulli percolation on \(T\). Thus our strategy is to use a coupling
idea analogous to elementary Galton--Watson branching arguments, but to replace
mean offspring numbers by capacities of boundary sets. For a fixed tree, these
capacities are the quantities that determine whether the relevant independent
percolation survives.

In the particular case where $p(e)\equiv p_\beta= 1-e^{-\beta}$, i.e.\ the percolation process in question is $\mathsf{link}_\beta[T]$, we need to consider the family of \emph{purely exponential gauge functions} $\{g(x)=r^{|x|}, r>1\}$, which we parametrise as $\bar{q}(x)=q^{-|x|}$ with $q\in(0,1)$ for notational convenience. This yields the characterisation
\[
\beta_c^{\mathsf{link}}(T)=\inf\{\beta\geq 0: \mathscr{C}_{\bar{p}_\beta}(\partial T)>0 \}
\]
of the link threshold in terms of boundary capacity. Another, sometimes more
convenient, characterisation of this critical value is via the
\emph{branching number} of \(T\):
\[
\operatorname{br}(T):=\sup\{1/q: \mathscr{C}_{\bar{q}}(\partial T)>0,  q\in(0,1)\}=(1-\textup{e}^{-\beta_c^{\mathsf{link}}(T)})^{-1}.
\]
The branching number is also related to recurrence and transience of biased
random walks on \(T\) through the electrical network interpretation. Lyons
\cite{lyons_percolation_1990} further established the relation
\begin{equation}\label{eq:dim-br}
\dim(\partial T)=\log (\operatorname{br}(T))
\end{equation}
for all infinite trees, which is the general version of Hawkes' theorem.

Positive \(g_p\)-capacity is the potential-theoretic form of supercriticality
for independent percolation with retention probabilities \(p(e)\). By the
definition of capacity,
\begin{equation}\label{eq:dirichlet}
\mathscr{C}_{g_p}(\partial T)>0
\quad\text{if and only if}\quad
\exists \nu\in\Pi(\partial T):\mathscr{E}_{g_p}(\nu)<\infty.
\end{equation}
When the capacity is positive, Dirichlet's principle gives a unique energy
minimiser, called the \emph{harmonic measure} of $g_p$ (or of
$(p(e),e\in T)$). It corresponds to the hitting distribution at $\partial T$ for
the weighted random walk on $T$ that starts at $o$ and is defined by the
transition probabilities
\[
\pi(a(x),x)=\frac{1/\Delta g_p(e(x))}{1/\Delta g_p(e(a(x))) + \sum_{y: a(y)=a(x)} 1/\Delta g_p(e(y)) },
\]
for \(x\neq o\), with the parent-edge term in the denominator omitted when
\(a(x)=o\). This boundary hitting distribution exists exactly when the walk is
transient.

For a Borel set \(A\subseteq\partial T\), define its \emph{relative branching
number} relative to \(T\) by
\[
\operatorname{br}_T(A):=\sup\{1/q:q\in(0,1),\ \mathscr{C}_{\bar q}(A)>0\}.
\]
For a set \(G\subseteq T\setminus\{o\}\) and \(a>0\), let
\[
B_a(G):=
\left\{\xi\in\partial T:
\liminf_{n\to\infty}\frac1n\#\{1\leq i\leq n:\xi_i\in G\}<a
\right\}.
\]
For a non-negative vertex weight \(\psi:T\setminus\{o\}\to[0,\infty)\), let
\[
B_a(\psi):=
\left\{\xi\in\partial T:
\liminf_{n\to\infty}\frac1n\sum_{i=1}^n\psi(\xi_i)<a
\right\}.
\]

We call the following deterministic statement the \emph{weighted pruning
criterion} behind Theorem~A.
\begin{theorem}[Weighted pruning criterion]\label{thm:weighted-pruning-criterion}
Fix \(u\in[0,1]\). Let \((T,o)\) be an infinite locally finite rooted tree with
\(0<\dim(\partial T)<\infty\), and put
\(\beta_0=\beta_c^\mathsf{link}(T)\). For \(\eta>0\), define a vertex weight
\(\psi_\eta:T\setminus\{o\}\to[0,\infty)\) as follows. If
\(|x|\not\equiv1\pmod 3\), or if \(x\) has no child, set
\(\delta_\eta(x)=0\) and \(\psi_\eta(x)=0\). If \(|x|\equiv1\pmod3\) and \(x\)
has at least one child, put
\[
D_T^+(x)=\max\{\deg_T(y):y\text{ is a child of }x\}
\]
and
\[
\delta_\eta(x)
=
\frac12
\min_{\substack{\lambda\in[\beta_0,\beta_0+\eta]\\
2\leq d\leq \deg_T(x)\\
1\leq d^\ast\leq D_T^+(x)}}
R_u(d,d^\ast,\lambda),
\qquad
\psi_\eta(x)=-\log(1-\delta_\eta(x)).
\]
Here \(R_u(d,d^\ast,\lambda)\) is the local pruning probability defined by
\eqref{eq:blockingprobabilities} for \(u>0\) and by
\eqref{eq:blockingprobabilities2} for \(u=0\). Thus
\(0\leq\delta_\eta(x)<1\), so that \(\psi_\eta(x)\) is well defined. Suppose
that there are \(\eta>0\) and \(a>0\) such that
\[
\operatorname{br}_T\left(
\left\{\xi\in\partial T:
\liminf_{n\to\infty}\frac1n
\sum_{i=1}^n\psi_\eta(\xi_i)<a
\right\}
\right)
<
\operatorname{br}(T).
\]
Then
\[
\beta_c^\mathsf{loop}(T)>\beta_c^\mathsf{link}(T).
\]
\end{theorem}
The strict inequality in the hypothesis is the \emph{weighted branching-gap
hypothesis}. The proof is given in Section~\ref{sec:weighted-pruning-proof}.

\begin{remark}[Bounded-degree criterion]\label{rem:bounded-degree-criterion}
The following \emph{bounded-degree criterion} implies the weighted
branching-gap hypothesis in Theorem~\ref{thm:weighted-pruning-criterion}.
Suppose that there are
\(M<\infty\) and \(a>0\) such that, for
\[
\begin{aligned}
G_{M,1}:=\{x\in T\setminus\{o\}:{}& |x|\equiv 1 \!\!\!\pmod 3,\,
\deg_T(x)\leq M,\\
&\text{and every neighbour of }x\text{ has degree at most }M\},
\end{aligned}
\]
the set \(B_a(G_{M,1})\) has relative branching number strictly smaller than
\(\operatorname{br}(T)\). Fix \(\eta>0\), let
\(\beta_0=\beta_c^\mathsf{link}(T)\), and let \(\psi_\eta\) be the weight in
Theorem~\ref{thm:weighted-pruning-criterion}. Increasing \(M\) if necessary,
assume \(M\geq2\). By Proposition~\ref{prop:obvious}, the constant
\[
\delta_0=
\frac12
\min_{\substack{\lambda\in[\beta_0,\beta_0+\eta]\\
2\leq d\leq M\\
1\leq d^\ast\leq M}}
R_u(d,d^\ast,\lambda)
\]
is positive. Along a boundary ray, every vertex of \(G_{M,1}\) lying on the ray
has a child, and \(\psi_\eta(x)\geq -\log(1-\delta_0)\) for such vertices.
Hence, with
\(c_0=-\log(1-\delta_0)\),
\[
B_{a c_0}(\psi_\eta)\subseteq B_a(G_{M,1}).
\]
Thus the weighted branching-gap hypothesis of
Theorem~\ref{thm:weighted-pruning-criterion} holds. This is the bounded-degree
form used below for Galton--Watson trees.
\end{remark}

The hypothesis in
Theorem~\ref{thm:weighted-pruning-criterion} is a capacity condition for the
actual pruning weights and the following large-deviation
criterion gives a convenient way to verify it when the boundary measure has
the displayed regularity. For a vertex \(x\), write
\(b_x\subseteq\partial T\) for the \emph{boundary cylinder} consisting of rays
passing through \(x\).

\begin{theorem}\label{thm:boundary-regular}
Fix \(u\in[0,1]\). Let \((T,o)\) be an infinite locally finite rooted tree with
\(0<\dim(\partial T)<\infty\), put \(b=\operatorname{br}(T)\), and let
\(\psi_\eta\) be the weight from
Theorem~\ref{thm:weighted-pruning-criterion} for some \(\eta>0\). Suppose
that there are \(a>0\), \(\kappa>0\), \(c>0\), and a Borel probability measure
\(\nu\) on \(\partial T\) such that
\[
\nu(b_x)\geq c b^{-|x|}
\]
for every vertex \(x\) with infinite descendants, and such that, for all
sufficiently large \(n\),
\[
\nu\left(\left\{\xi\in\partial T:
\sum_{i=1}^n\psi_\eta(\xi_i)<a n
\right\}\right)
\leq c^{-1}e^{-\kappa n}.
\]
Then
\[
\beta_c^\mathsf{loop}(T)>\beta_c^\mathsf{link}(T).
\]
\end{theorem}

In particular, Theorem~\ref{thm:boundary-regular} applies almost surely in
random rooted-tree models for which a maximal-dimension boundary measure
satisfies the displayed cylinder lower bound and the pruning large-deviation
estimate. This is the form one obtains, for example, in finite-state or
quasi-Bernoulli boundary models, and in product-type finite-range decorations
of a regular backbone, once the corresponding boundary process has a positive
average pruning weight and the associated exponential lower-tail bound.

The rest of this section develops the inputs used in the proof of
Theorem~\ref{thm:weighted-pruning-criterion} and verifies the
Galton--Watson case of the simpler bounded-degree branching-gap hypothesis.
\begin{theorem}\label{thm:percolationontrees}
Let $(T,o)$ be an infinite locally finite rooted tree with
\(\operatorname{br}(T)\in(1,\infty)\), and set
\(p_c=\operatorname{br}(T)^{-1}\). Let
\(\delta:T\setminus\{o\}\to[0,1)\) be a deterministic deletion profile, and put
\(\psi(x)=-\log(1-\delta(x))\). Suppose that there is an \(a>0\) such that
\[
\operatorname{br}_T(B_a(\psi))<\operatorname{br}(T).
\]
Then there exists \(p_\ast\in(p_c,1)\) such that for every
\(p\in(p_c,p_\ast)\) the independent inhomogeneous percolation with edge
retention probabilities
\[
q(e_x)=p\bigl(1-\delta(x)\bigr),\quad x\in T\setminus\{o\},
\]
has finite root component almost surely.
\end{theorem}
\begin{proof}
Choose constants \(0<\alpha<a\) and \(0<\gamma<\alpha\). Since
\(B_\alpha(\psi)\subseteq B_a(\psi)\), we have
\[
\operatorname{br}_T(B_\alpha(\psi))\leq \operatorname{br}_T(B_a(\psi))
<\operatorname{br}(T).
\]
Put \(B=B_\alpha(\psi)\) and \(A=\partial T\setminus B\).
Choose \(p_\ast\in(p_c,1)\) so close to \(p_c\) that
\[
p_\ast<\operatorname{br}_T(B)^{-1}
\quad\text{and}\quad
p_\ast e^{-\gamma}<p_c,
\]
where the first upper bound is interpreted as \(+\infty\) if
\(\operatorname{br}_T(B)=0\). Fix \(p\in(p_c,p_\ast)\).

Assume for contradiction that the root component of the $q$-percolation is
infinite with positive probability. By \eqref{eq:precviacap} and the definition
of capacity, there is a Borel probability measure $\nu$ on $\partial T$ with
finite $g_q$-energy, where
\[
g_q(x)=\prod_{o<y\leq x}q(e_y)^{-1}
=p^{-|x|}\exp\left(\sum_{o<y\leq x}\psi(y)\right).
\]
At least one of \(\nu(B)\) and \(\nu(A)\) is positive. If \(\nu(B)>0\), let
\(\nu_B\) be the normalised restriction of \(\nu\) to \(B\). Since
\(q(e)\leq p\), we have \(g_q(x)\geq \bar p(x)\) for every \(x\), and hence
\[
\mathscr E_{\bar p}(\nu_B)\leq \mathscr E_{g_q}(\nu_B)<\infty.
\]
Thus \(B\) has positive \(\bar p\)-capacity, so
\(\operatorname{br}_T(B)\geq 1/p\). This contradicts
\(p<p_\ast<\operatorname{br}_T(B)^{-1}\).

It remains to consider the case \(\nu(A)>0\).

For \(m\geq1\), define
\[
A_m=\left\{\xi\in A:
\sum_{i=1}^n\psi(\xi_i)\geq\gamma n
\text{ for every }n\geq m
\right\}.
\]
Since \(\gamma<\alpha\), we have \(A=\bigcup_m A_m\), so there is an \(m\) with
\(\nu(A_m)>0\). Let \(\nu_m\) be the normalised restriction of \(\nu\) to
\(A_m\). Put \(p'=pe^{-\gamma}<p_c\). If \(\xi,\eta\in A_m\) and
\(x=\xi\wedge\eta\) satisfies \(|x|\geq m\), then
\[
g_q(x)=p^{-|x|}
\exp\left(\sum_{o<y\leq x}\psi(y)\right)
\geq p^{-|x|}e^{\gamma |x|}
=(p')^{-|x|}=\bar p'(x).
\]
For the finitely many possible common ancestors with \(|x|<m\), local
finiteness and \(\delta(x)<1\) at each vertex give a positive constant \(c_m\)
such that
\(g_q(x)\geq c_m\bar p'(x)\) for all common ancestors of pairs of rays in
\(A_m\). Consequently
\[
\mathscr E_{\bar p'}(\nu_m)\leq c_m^{-1}\mathscr E_{g_q}(\nu_m)<\infty.
\]
Thus \(A_m\) has positive \(\bar p'\)-capacity, and therefore so does
\(\partial T\). This contradicts \(p'<p_c=\operatorname{br}(T)^{-1}\). The root
component of the \(q\)-percolation is therefore finite almost surely.
\end{proof}

\begin{prop}\label{prop:gw-branching-gap}
Let \(T\) be a Galton--Watson tree with offspring law \(\zeta\), offspring
variable \(Z\), and finite mean \(m=\E Z\in(1,\infty)\), conditioned on
survival. Fix \(r\in\{0,1,2\}\). Then there are \(M<\infty\) and \(a>0\)
such that, almost surely,
\[
\operatorname{br}_T(B_a(G_{M,r}))<\operatorname{br}(T)=m,
\]
where
\[
\begin{aligned}
G_{M,r}:=\{x\in T\setminus\{o\}:{}& |x|\equiv r \!\!\!\pmod 3,\,
\deg_T(x)\leq M,\\
&\text{and every neighbour of }x\text{ has degree at most }M\}.
\end{aligned}
\]
The same conclusion holds for the \emph{augmented Galton--Watson tree}
obtained by joining the roots of two independent copies by one extra edge and
rooting the resulting tree at one of these two original roots, on the event that
the augmented tree is infinite.
\end{prop}
\begin{proof}
By \cite[Corollary 5.10]{lyons_probability_2017},
\(\operatorname{br}(T)=m\) almost surely on survival. We prove a strict
relative branching-number bound for \(B_a(G_{M,r})\).

Let \(\widehat Z\) be the \emph{size-biased offspring variable},
\[
\P(\widehat Z=k)=\frac{k\zeta_k}{m},\qquad k\geq1.
\]
For \(x\) at level \(n\), write \(v_i(x)\) for the ancestor of \(x\) at level
\(i\). We use the following first-moment identity. If \(F\) is a
non-negative function of the offspring variables along the ancestral line of a
level-\(n\) vertex and of the offspring variables of the children of these
ancestors, then
\[
\E\sum_{|x|=n}F(x)=m^n\E_\ast F(\xi_n),
\]
where under \(\P_\ast\) the ancestral offspring variables
\(\widehat Z_0,\ldots,\widehat Z_{n-1}\) have the size-biased law, the chosen
\emph{spine child} is uniform among the children at each ancestral level, the
terminal offspring variable has the original law, and all \emph{off-spine}
child subtrees have the original Galton--Watson law. Indeed, for bounded
cylinder \(F\), expand the
left-hand side over the ordered child choices along the ancestral path. Each
ancestral level contributes one factor \(Z_i\); after normalising these factors
by \(m\), the ancestral offspring variables have the size-biased law and the
selected child index is uniform. Monotone convergence gives the identity for
non-negative \(F\).

Set
\[
F_M=\P(Z\leq M-1),\qquad
\widehat F_M=\P(\widehat Z\leq M-1).
\]
For non-root vertices, \(\deg_T(v)\leq M\) is equivalent to the offspring
variable at \(v\) being at most \(M-1\).
For an internal level \(i\), the event \(v_i(\xi_n)\in G_{M,r}\) depends only
on the spine offspring variables at levels \(i-1,i,i+1\) and on the off-spine
children of the level-\(i\) spine vertex. Hence, restricted to one residue
class modulo \(3\), these events use disjoint variables and are independent.
For all such internal levels the success probability is
\[
p_M=\widehat F_M^2
\E\left[
\mathbf 1_{\{\widehat Z\leq M-1\}}F_M^{\widehat Z-1}
\right].
\]
The two factors \(\widehat F_M\) control the parent spine vertex and the spine
child, while the expectation controls the current spine vertex and all its
non-spine children. Since \(Z<\infty\) almost surely and \(\E Z<\infty\),
\(\widehat Z<\infty\) almost surely, and dominated convergence gives
\(p_M\to1\) as \(M\to\infty\). Choose \(M\) so large that \(p_M>3/4\), and set
\(a=1/7\).

Let
\[
S_n=\#\{2\leq i\leq n-2:
i\equiv r \!\!\!\pmod 3,\ v_i(\xi_n)\in G_{M,r}\}.
\]
Under \(\P_\ast\), \(S_n\) is binomial with
\[
N_n=\#\{2\leq i\leq n-2:i\equiv r \!\!\!\pmod 3\}=n/3+O(1)
\]
trials and success probability \(p_M\). The constant \(1/6\) is
\(\frac12\cdot\frac13\): since \(a=1/7<1/6\), we have
\(a n<N_n/2\) for all sufficiently large \(n\). Together with \(p_M>3/4\), a
Chernoff bound gives constants \(C<\infty\) and \(c>0\) such that
\[
\P_\ast(S_n<a n)\leq C\exp(-cn)
\]
for every \(n\).

Define the bad level-\(n\) prefixes by
\[
L_n=\{x:|x|=n,
\#\{2\leq i\leq n-2:
i\equiv r \!\!\!\pmod 3,\ v_i(x)\in G_{M,r}\}<a n\}.
\]
The first-moment identity yields
\[
\E |L_n|\leq C m^n\exp(-cn).
\]
Choose \(\rho\) with \(\max\{1,m\exp(-c)\}<\rho<m\). Markov's inequality and
the Borel--Cantelli lemma imply that, almost surely,
\[
|L_n|\leq \rho^n
\]
for all sufficiently large \(n\). This almost-sure statement remains valid
after conditioning on survival.

Let \(B=B_a(G_{M,r})\). Every ray in \(B\) has its level-\(n\) prefix in
\(L_n\) for infinitely many \(n\). Put
\[
U_n=\bigcup_{x\in L_n} b_x.
\]
If \(\mu\) is a Borel probability measure on \(\partial T\) with \(\mu(B)=1\),
then Tonelli's theorem gives \(\sum_n\mu(U_n)=\infty\).

Fix \(q<1/\rho\). For the exponential gauge \(\bar q(x)=q^{-|x|}\),
\[
\mathscr E_{\bar q}(\mu)
=1+\sum_{n\geq1}(q^{-n}-q^{-(n-1)})
\sum_{|x|=n}\mu(b_x)^2 .
\]
For all large \(n\), the cylinders \((b_x)_{x\in L_n}\) are disjoint and
\[
\sum_{x\in L_n}\mu(b_x)^2
\geq \frac{\mu(U_n)^2}{|L_n|}
\geq \rho^{-n}\mu(U_n)^2 .
\]
Therefore
\[
\mathscr E_{\bar q}(\mu)
\geq (1-q)\sum_{n\geq N}(q\rho)^{-n}\mu(U_n)^2
\]
for a sufficiently large \(N\). Since \(q\rho<1\), Cauchy--Schwarz implies
that the last sum diverges whenever \(\sum_n\mu(U_n)=\infty\). Thus every
probability measure carried by \(B\) has infinite \(\bar q\)-energy, so
\(\mathscr C_{\bar q}(B)=0\) for every \(q<1/\rho\). Consequently
\[
\operatorname{br}_T(B_a(G_{M,r}))\leq\rho<m=\operatorname{br}(T).
\]

For the augmented Galton--Watson tree, the boundary is the finite union of the
boundaries of the surviving independent copies, up to finite shifts in level.
The extra joining edge changes the set \(G_{M,r}\) only at finitely many
vertices on each ray, and the residue class in the second copy is merely
shifted. The preceding proof is uniform in \(r\), and the finite shifts change
the densities by \(o(1)\). For a finite union of boundary pieces, positive
\(\bar q\)-capacity of the union is equivalent to positive \(\bar q\)-capacity
of at least one piece: one direction is monotonicity, while in the other
direction a probability measure of finite energy on the union gives, after
restricting to a piece of positive mass and renormalising, a finite-energy
measure on that piece. Hence the relative branching number of the augmented
boundary, and of its bad-ray set, is the maximum of the corresponding relative
branching numbers in the surviving copies. The same strict branching gap
therefore holds on the event that the augmented tree is infinite.
\end{proof}

\section{Proof of the weighted pruning criterion}\label{sec:weighted-pruning-proof}
We now use the machinery set up in the previous section to prove
Theorem~\ref{thm:weighted-pruning-criterion}.

\begin{proof}[Proof of Theorem~\ref{thm:weighted-pruning-criterion}]
By the dimensional hypothesis and \eqref{eq:dim-br} we have
\[
1<\operatorname{br}(T)<\infty .
\]
Put \(p_c=\operatorname{br}(T)^{-1}\) and
\(\beta_0=\beta_c^\mathsf{link}(T)=-\log(1-p_c)\).

Let \(\eta>0\), \(a>0\), and the profiles \(\delta_\eta,\psi_\eta\) be as in
the branching-gap hypothesis of
Theorem~\ref{thm:weighted-pruning-criterion}. By construction,
\(\delta_\eta:T\setminus\{o\}\to[0,1)\).

Let \(\beta\in(\beta_0,\beta_0+\eta]\) and set \(p_\beta=1-\exp(-\beta)\). First
perform link percolation and denote the root link cluster by \(C_\beta(o)\).
Conditionally on \(C_\beta(o)\), consider a vertex
\(x\in C_\beta(o)\) with \(|x|\equiv1\pmod3\) and at least one child in
\(C_\beta(o)\). For such \(x\), set
\[
d_C(x)=\deg_{C_\beta(o)}(x),\qquad
d_C^\ast(x)=\min\{\deg_{C_\beta(o)}(y):y\text{ is a child of }x
\text{ in }C_\beta(o)\}.
\]
Then
\[
2\leq d_C(x)\leq \deg_T(x),
\]
and \(1\leq d_C^\ast(x)\leq D_T^+(x)\). Since
\(\beta\in[\beta_0,\beta_0+\eta]\), the parameter triple
\((d_C(x),d_C^\ast(x),\beta)\) lies in the minimisation range defining
\(\delta_\eta(x)\). Hence
\[
R_u(d_C(x),d_C^\ast(x),\beta)
\geq
\min_{\substack{\lambda\in[\beta_0,\beta_0+\eta]\\
2\leq d\leq \deg_T(x)\\
1\leq d^\ast\leq D_T^+(x)}}
R_u(d,d^\ast,\lambda)
=2\delta_\eta(x).
\]
This left-hand side is exactly the delayed pruning probability at \(x\),
computed inside \(C_\beta(o)\).

For each such \(x\), choose a child of minimal \(C_\beta(o)\)-degree, using an
extra independent mark to break ties. The pruning event attached to \(x\) uses
only the parent edge of \(x\), the chosen child edge, and the other edges
incident to \(x\) and to the chosen child; it does not use the other edges
incident to the parent \(a(x)\). If \(x,z\) are two distinct selected vertices,
then \(|x|\equiv |z|\equiv 1\pmod 3\), and the level-spacing check in the proof
of Lemma~\ref{lem:coupling1} shows that these finite edge sets are disjoint.
Moreover conditioning on \(C_\beta(o)\) is a product conditioning over the
edges, namely on the events that an edge has zero links or at least one link.
The corresponding pruning events are therefore conditionally independent. We
may consequently couple them with independent Bernoulli variables of parameters
\(\delta_\eta(x)\) so that a Bernoulli deletion at a selected vertex lying on
an infinite ray of \(C_\beta(o)\) forces the corresponding delayed-pruning
event.

It follows that if the independent two-stage process obtained by retaining
edges with probability \(p_\beta\) and then deleting each vertex \(x\) with
probability \(\delta_\eta(x)\) has finite root component, then so does
\(\mathsf{delay}_\beta\circ\mathsf{link}_\beta[T]\). Indeed, an infinite ray
surviving delayed pruning would also survive the auxiliary independent
two-stage thinning, up to finite dead ends which do not affect connection to
infinity. Theorem~\ref{thm:percolationontrees}, applied to
\(\delta_\eta\), gives a \(p_\ast>p_c\) such that the auxiliary two-stage
process has finite root component almost surely for every
\(p_\beta\in(p_c,p_\ast)\).
Replacing \(p_\ast\), if necessary, by a smaller value in
\((p_c,1-\exp(-\beta_0-\eta))\), and setting
\(\beta_\ast=-\log(1-p_\ast)\), we obtain
\[
\P\bigl(\mathsf{delay}_\beta\circ\mathsf{link}_\beta[T]
\text{ has an infinite root component}\bigr)=0
\quad\text{for all }\beta\in(\beta_0,\beta_\ast).
\]
Under the coupling in Lemma~\ref{lem:coupling1}, every infinite branch visited
by the root loop would give an infinite branch in
\(\mathsf{delay}_\beta\circ\mathsf{link}_\beta[T]\). Therefore the root loop is
finite almost surely throughout this interval. Hence
\(\beta_c^\mathsf{loop}(T)\geq \beta_\ast>\beta_0
=\beta_c^\mathsf{link}(T)\).
\end{proof}

\section{Proofs of the application theorems}\label{sec:applications-proofs}

\begin{proof}[Proof of Theorem~\ref{thm:boundary-regular}]
Let
\[
B=
\left\{\xi\in\partial T:
\liminf_{n\to\infty}\frac1n
\sum_{i=1}^n\psi_\eta(\xi_i)<a
\right\}.
\]
For \(n\geq1\), define
\[
L_n=\left\{x\in T: |x|=n,\,
\sum_{i=1}^n\psi_\eta(v_i(x))<a n
\right\},
\]
where \(v_i(x)\) is the ancestor of \(x\) at level \(i\). We may discard
vertices with no infinite descendants, since their boundary cylinders are
empty. The cylinders \((b_x)_{x\in L_n}\) are disjoint, and the hypotheses give
\[
c b^{-n}|L_n|
\leq
\sum_{x\in L_n}\nu(b_x)
\leq
c^{-1}e^{-\kappa n}
\]
for all sufficiently large \(n\). Hence
\[
|L_n|\leq c^{-2}(b e^{-\kappa})^n
\]
for all sufficiently large \(n\).

Every ray in \(B\) has prefixes in \(L_n\) for infinitely many \(n\). Put
\[
U_n=\bigcup_{x\in L_n}b_x .
\]
If \(\mu\) is a Borel probability measure on \(\partial T\) with
\(\mu(B)=1\), then Tonelli's theorem gives \(\sum_n\mu(U_n)=\infty\).

Fix \(\rho>b e^{-\kappa}\). For all large \(n\), we have
\(|L_n|\leq C\rho^n\) for some finite \(C\). Let \(q\in(0,1)\) satisfy
\(q\rho<1\). For the exponential gauge \(\bar q(x)=q^{-|x|}\),
\[
\mathscr E_{\bar q}(\mu)
\geq
(1-q)\sum_{n\geq N}q^{-n}
\sum_{x\in L_n}\mu(b_x)^2
\geq
\frac{1-q}{C}\sum_{n\geq N}(q\rho)^{-n}\mu(U_n)^2 .
\]
Since \(\sum_n(q\rho)^n<\infty\), Cauchy--Schwarz implies that the last sum
diverges whenever \(\sum_n\mu(U_n)=\infty\). Thus every probability measure
carried by \(B\) has infinite \(\bar q\)-energy for every \(q\in(0,1)\) with
\(q\rho<1\). Consequently \(\operatorname{br}_T(B)\leq\rho\). Letting
\(\rho\downarrow b e^{-\kappa}\) gives
\[
\operatorname{br}_T(B)\leq b e^{-\kappa}<b=\operatorname{br}(T).
\]
Therefore the weighted branching-gap hypothesis in
Theorem~\ref{thm:weighted-pruning-criterion} is satisfied, and
Theorem~\ref{thm:weighted-pruning-criterion} gives the claimed strict
inequality of critical values.
\end{proof}

\begin{proof}[Proof of Theorem~A]
Condition the Galton--Watson tree on survival. By
\cite[Corollary 5.10]{lyons_probability_2017},
\(\operatorname{br}(T)=m\) almost surely. Hence
\[
\beta_c^\mathsf{link}(T)=-\log(1-m^{-1})
\]
almost surely by the branching-number characterisation in
Section~\ref{sec:potential}. Proposition~\ref{prop:gw-branching-gap}, with
\(r=1\), verifies the bounded-degree criterion in
Remark~\ref{rem:bounded-degree-criterion}, and hence the weighted
branching-gap hypothesis in
Theorem~\ref{thm:weighted-pruning-criterion} for each fixed \(u\in[0,1]\). The
Galton--Watson event supplied by Proposition~\ref{prop:gw-branching-gap} does
not depend on \(u\). Since \(\dim(\partial T)=\log m\) by \eqref{eq:dim-br},
Theorem~\ref{thm:weighted-pruning-criterion} applies and gives
\[
\beta_c^\mathsf{loop}(T)>\beta_c^\mathsf{link}(T)
\]
almost surely on survival.
\end{proof}

\begin{proof}[Proof of Theorem~B]
Work in the crosses-only model and fix
\(\beta>0\). Under the annealed Galton--Watson law, the root link cluster is a
Galton--Watson tree with offspring variable
\[
Z_\beta\mid Z\sim\operatorname{Bin}(Z,1-e^{-\beta}).
\]
Define an auxiliary rooted subtree \(T'\) recursively. The root is present. If
a present vertex \(x\) has at least one child edge \(e_y\), \(a(y)=x\), that
carries more than one cross, then \(x\) is given no children in \(T'\).
Otherwise all children joined to \(x\) by at least one cross are retained in
\(T'\). Thus every edge of \(T'\) carries exactly one cross.

If \(T'\) is infinite, then the root loop is infinite. Indeed, in the
crosses-only representation a finite loop visits the whole vertical circle
over each vertex in its permutation cycle. Therefore, if the root loop were
finite and had vertex set \(W\), then \(W\) would be closed under all retained
one-cross child edges in \(T'\): when the loop visits \(x\in W\), it must meet
and cross the unique cross on every retained child edge of \(x\). No finite
rooted vertex set in an infinite locally finite rooted tree is closed under
all child edges, so \(W\) cannot be finite.

Conditionally on \(Z_\beta=n\), each of the \(n\) retained link edges carries
more than one cross with conditional probability
\[
h(\beta)
=
\frac{e^{-\beta}\sum_{k\geq2}\beta^k/k!}{1-e^{-\beta}}
=
\P(P>1\mid P>0),
\qquad P\sim\operatorname{Poisson}(\beta).
\]
Thus \(T'\) is a Galton--Watson tree with offspring variable
\[
Y_\beta=Z_\beta\mathbf 1_{\{B(Z_\beta,\beta)=0\}},
\]
where \(B(n,\beta)\sim\operatorname{Bin}(n,h(\beta))\). Let
\(f_\beta(z)=\E z^{Z_\beta}\). Then
\[
\E Y_\beta
=\E\left[Z_\beta(1-h(\beta))^{Z_\beta}\right]
=(1-h(\beta))f_\beta'(1-h(\beta)).
\]
Since
\[
f_\beta'(z)
=(1-e^{-\beta})
f'\bigl(e^{-\beta}+(1-e^{-\beta})z\bigr)
\]
and
\[
1-h(\beta)=\frac{\beta e^{-\beta}}{1-e^{-\beta}},
\]
we obtain
\[
\E Y_\beta
=
\beta e^{-\beta}f'\bigl((1+\beta)e^{-\beta}\bigr).
\]
Set \(\varepsilon_\beta=1-(1+\beta)e^{-\beta}\). Then
\[
\varepsilon_\beta=\frac{\beta^2}{2}+O(\beta^3),
\qquad
\frac{\beta e^{-\beta}}{\sqrt{\varepsilon_\beta}}\to\sqrt2 .
\]
The strict margin in the tail condition of Theorem~B therefore implies
\(\E Y_\beta>1\) for all sufficiently small \(\beta>0\). Hence \(T'\)
survives with positive annealed probability for all sufficiently small
\(\beta>0\), and so
\(\beta_{c,\mathrm{ann}}^\mathsf{loop}(\zeta)=0\). The same condition forces
\(m=\infty\), and then
\(\E Z_\beta=(1-e^{-\beta})m=\infty\) for every \(\beta>0\). The offspring law
of the root link cluster is not concentrated at \(0\), and this Galton--Watson
process therefore survives with positive probability for every \(\beta>0\). Thus
\(\beta_{c,\mathrm{ann}}^\mathsf{link}(\zeta)=0\).

If, up to normalisation, \(\zeta_k=k^{-\tau}L(k)\) for \(k\geq1\), with
\(\tau\in(1,3/2)\) and \(L\) slowly varying, then classical Tauberian
estimates, for instance \cite[XIII.5, Theorem 1]{feller2}, imply that
\(f'(1-\varepsilon)\) is regularly varying with index \(\tau-2<-1/2\).
Thus \(\sqrt{\varepsilon}f'(1-\varepsilon)\to\infty\), and the tail condition
of Theorem~B holds.
\end{proof}

\begin{proof}[Proof of the subcritical equality stated after Theorem~B]
Suppose \(m\leq1\). If \(m<1\), or if \(m=1\) and
\(\zeta\neq\delta_1\), the Galton--Watson tree dies out almost surely, and
both critical values are infinite for almost every realised tree. The
remaining case is \(\zeta=\delta_1\), where \(T\) is the one-way infinite line.
For every finite \(\beta\), link percolation on this line has retention
probability \(1-e^{-\beta}<1\), so the root link cluster is finite almost
surely. The root loop is contained in the root link cluster, and hence is
finite almost surely. Therefore both critical values are infinite.
\end{proof}

\noindent\textbf{\large Acknowledgements}. We thank Volker Betz for sharing his insights on the loop model and his helpful comments on the first draft of the manuscript. We further thank Daniel Ueltschi for inspiring discussions and hosting AK's visit to Warwick in February 2025.

\section*{References}
\renewcommand*{\bibfont}{\footnotesize}
\printbibliography[heading = none]

@book{feller2,
    AUTHOR = {Feller, William},
     TITLE = {An Introduction to Probability Theory and its Applications.
              {V}ol. {II}},
   EDITION = {Second},
 PUBLISHER = {John Wiley \& Sons, Inc., New York-London-Sydney},
      YEAR = {1971},
     PAGES = {xxiv+669},
   MRCLASS = {60.00},
  MRNUMBER = {270403}
}

@article{betz2025improved,
  title={Improved bounds for connection probabilities in random loop models},
  author={Betz, Volker and Klippel, Andreas and Nauth, Julian},
  journal={arXiv preprint arXiv:2503.17857},
  year={2025}
}

@article {BetzEhlLees18,
    AUTHOR = {Betz, Volker and Ehlert, Johannes and Lees, Benjamin},
     TITLE = {Phase transition for loop representations of quantum spin
              systems on trees},
   JOURNAL = {J. Math. Phys.},
  FJOURNAL = {Journal of Mathematical Physics},
    VOLUME = {59},
      YEAR = {2018},
    NUMBER = {11},
     PAGES = {Paper no. 113302, 13 pages},
      ISSN = {0022-2488,1089-7658},
   MRCLASS = {81Q80 (60K35)},
  MRNUMBER = {3877248},
MRREVIEWER = {David\ A.\ Croydon},
       DOI = {10.1063/1.5032152},
       URL = {https://doi.org/10.1063/1.5032152},
}

@article {lyons_ergodicGW_1995,
    AUTHOR = {Lyons, Russell and Pemantle, Robin and Peres, Yuval},
     TITLE = {Ergodic theory on {G}alton-{W}atson trees: speed of random
              walk and dimension of harmonic measure},
   JOURNAL = {Ergodic Theory Dynam. Systems},
  FJOURNAL = {Ergodic Theory and Dynamical Systems},
    VOLUME = {15},
      YEAR = {1995},
    NUMBER = {3},
     PAGES = {593--619},
      ISSN = {0143-3857,1469-4417},
   MRCLASS = {60J15 (60J80)},
  MRNUMBER = {1336708},
MRREVIEWER = {Ben\ Hambly},
       DOI = {10.1017/S0143385700008543},
       URL = {https://doi.org/10.1017/S0143385700008543},
}

@article {lyons_percolation_1990,
    AUTHOR = {Lyons, Russell},
     TITLE = {Random walks and percolation on trees},
   JOURNAL = {Ann. Probab.},
  FJOURNAL = {The Annals of Probability},
    VOLUME = {18},
      YEAR = {1990},
    NUMBER = {3},
     PAGES = {931--958},
      ISSN = {0091-1798,2168-894X},
   MRCLASS = {60J15 (05C05 60K35 82B41 82B43)},
  MRNUMBER = {1062053},
MRREVIEWER = {Gregory\ F.\ Lawler},
       URL =
              {http://links.jstor.org/sici?sici=0091-1798(199007)18:3<931:RWAPOT>2.0.CO;2-6&origin=MSN},
DOI={10.1214/aop/1176990730},
}

@article {lyons_capacity_1992,
    AUTHOR = {Lyons, Russell},
     TITLE = {Random walks, capacity and percolation on trees},
   JOURNAL = {Ann. Probab.},
  FJOURNAL = {The Annals of Probability},
    VOLUME = {20},
      YEAR = {1992},
    NUMBER = {4},
     PAGES = {2043--2088},
      ISSN = {0091-1798,2168-894X},
   MRCLASS = {60J15 (60K35 82B41 82B43)},
  MRNUMBER = {1188053},
MRREVIEWER = {Robin\ Pemantle},
       URL =
              {http://links.jstor.org/sici?sici=0091-1798(199210)20:4<2043:RWCAPO>2.0.CO;2-A&origin=MSN},
DOI={10.1214/aop/1176989540},
}

@article {schramm_compositions_2005,
    AUTHOR = {Schramm, Oded},
     TITLE = {Compositions of random transpositions},
   JOURNAL = {Israel J. Math.},
  FJOURNAL = {Israel Journal of Mathematics},
    VOLUME = {147},
      YEAR = {2005},
     PAGES = {221--243},
      ISSN = {0021-2172,1565-8511},
   MRCLASS = {60C05 (37H99 60B12)},
  MRNUMBER = {2166362},
MRREVIEWER = {F.\ W.\ Steutel},
       DOI = {10.1007/BF02785366},
       URL = {https://doi.org/10.1007/BF02785366},
}

@article {muehlbacher_critical_2021,
    AUTHOR = {M\"uhlbacher, Peter},
     TITLE = {Critical parameters for loop and {B}ernoulli percolation},
   JOURNAL = {ALEA Lat. Am. J. Probab. Math. Stat.},
  FJOURNAL = {ALEA. Latin American Journal of Probability and Mathematical
              Statistics},
    VOLUME = {18},
      YEAR = {2021},
    NUMBER = {1},
     PAGES = {289--308},
      ISSN = {1980-0436},
   MRCLASS = {60K35},
  MRNUMBER = {4213859},
MRREVIEWER = {Subhajit\ Goswami},
       DOI = {10.30757/alea.v18-13},
       URL = {https://doi.org/10.30757/alea.v18-13},
}

@article {harris_markov_lattices_1972,
    AUTHOR = {Harris, T. E.},
     TITLE = {Nearest-neighbor {M}arkov interaction processes on
              multidimensional lattices},
   JOURNAL = {Advances in Math.},
  FJOURNAL = {Advances in Mathematics},
    VOLUME = {9},
      YEAR = {1972},
     PAGES = {66--89},
      ISSN = {0001-8708},
   MRCLASS = {60K35},
  MRNUMBER = {307392},
MRREVIEWER = {C.\ Stone},
       DOI = {10.1016/0001-8708(72)90030-8},
       URL = {https://doi.org/10.1016/0001-8708(72)90030-8},
}

@book {lyons_probability_2017,
    AUTHOR = {Lyons, Russell and Peres, Yuval},
     TITLE = {Probability on trees and networks},
    SERIES = {Cambridge Series in Statistical and Probabilistic Mathematics},
    VOLUME = {42},
 PUBLISHER = {Cambridge University Press, New York},
      YEAR = {2016},
     PAGES = {xv+699},
      ISBN = {978-1-107-16015-6},
   MRCLASS = {60C05 (05C05 05C81 28A80 60J10 60J80 60K35 82B41)},
  MRNUMBER = {3616205},
MRREVIEWER = {Laurent\ Miclo},
       DOI = {10.1017/9781316672815},
       URL = {https://doi.org/10.1017/9781316672815},
}

@article {powers_heisenberg,
    AUTHOR = {Powers, Robert T.},
     TITLE = {Heisenberg model and a random walk on the permutation group},
   JOURNAL = {Lett. Math. Phys.},
  FJOURNAL = {Letters in Mathematical Physics. A Journal for the Rapid
              Dissemination of Short Contributions in the Field of
              Mathematical Physics},
    VOLUME = {1},
      DATE = {1975/1976},
    NUMBER = {2},
     PAGES = {125--130},
      ISSN = {0377-9017},
   MRCLASS = {82.60},
  MRNUMBER = {418794},
MRREVIEWER = {Hans-Otto\ Georgii},
       DOI = {10.1007/BF00398374},
       URL = {https://doi.org/10.1007/BF00398374},
}

@article {toth_pressure,
    AUTHOR = {T\'oth, B\'alint},
     TITLE = {Improved lower bound on the thermodynamic pressure of the spin
              {$1/2$} {H}eisenberg ferromagnet},
   JOURNAL = {Lett. Math. Phys.},
  FJOURNAL = {Letters in Mathematical Physics},
    VOLUME = {28},
      YEAR = {1993},
    NUMBER = {1},
     PAGES = {75--84},
      ISSN = {0377-9017,1573-0530},
   MRCLASS = {82D40 (82B20)},
  MRNUMBER = {1224836},
MRREVIEWER = {Ladislav\ \u Samaj},
       DOI = {10.1007/BF00739568},
       URL = {https://doi.org/10.1007/BF00739568},
}

@article {aizenman_nachtergaele,
    AUTHOR = {Aizenman, Michael and Nachtergaele, Bruno},
     TITLE = {Geometric aspects of quantum spin states},
   JOURNAL = {Comm. Math. Phys.},
  FJOURNAL = {Communications in Mathematical Physics},
    VOLUME = {164},
      YEAR = {1994},
    NUMBER = {1},
     PAGES = {17--63},
      ISSN = {0010-3616,1432-0916},
   MRCLASS = {82B20},
  MRNUMBER = {1288152},
       URL = {http://projecteuclid.org/euclid.cmp/1104270709},
DOI={10.1007/bf02108805},
}

@article {interchange_with_reversals,
    AUTHOR = {Bj\"ornberg, Jakob E. and Kotowski, Micha\l{} and Lees,
              Benjamin and Mi\l o\'s, Piotr},
     TITLE = {The interchange process with reversals on the complete graph},
   JOURNAL = {Electron. J. Probab.},
  FJOURNAL = {Electronic Journal of Probability},
    VOLUME = {24},
      YEAR = {2019},
     PAGES = {Paper No. 108, 43 pages},
      ISSN = {1083-6489},
   MRCLASS = {60K35 (60J27)},
  MRNUMBER = {4017126},
       DOI = {10.1214/19-ejp366},
       URL = {https://doi.org/10.1214/19-ejp366},
}

@article {milos_sengul,
    AUTHOR = {Mi\l o\'s, Piotr and \c Seng\"ul, Bat\i},
     TITLE = {Existence of a phase transition of the interchange process on
              the {H}amming graph},
   JOURNAL = {Electron. J. Probab.},
  FJOURNAL = {Electronic Journal of Probability},
    VOLUME = {24},
      YEAR = {2019},
     PAGES = {Paper No. 64, 21 pages},
      ISSN = {1083-6489},
   MRCLASS = {60G99 (60C05 60J85)},
  MRNUMBER = {3978214},
       DOI = {10.1214/18-EJP171},
       URL = {https://doi.org/10.1214/18-EJP171},
}

@article {kotecky_hypercube,
    AUTHOR = {Koteck\'y, Roman and Mi\l o\'s, Piotr and Ueltschi, Daniel},
     TITLE = {The random interchange process on the hypercube},
   JOURNAL = {Electron. Commun. Probab.},
  FJOURNAL = {Electronic Communications in Probability},
    VOLUME = {21},
      YEAR = {2016},
     PAGES = {Paper No. 4, 9},
      ISSN = {1083-589X},
   MRCLASS = {60K35 (60C05 82C44)},
  MRNUMBER = {3485373},
       DOI = {10.1214/16-ECP4540},
       URL = {https://doi.org/10.1214/16-ECP4540},
}

@article {hammond_infinitecycles,
    AUTHOR = {Hammond, Alan},
     TITLE = {Infinite cycles in the random stirring model on trees},
   JOURNAL = {Bull. Inst. Math. Acad. Sin. (N.S.)},
  FJOURNAL = {Bulletin of the Institute of Mathematics. Academia Sinica. New
              Series},
    VOLUME = {8},
      YEAR = {2013},
    NUMBER = {1},
     PAGES = {85--104},
      ISSN = {2304-7909,2304-7895},
   MRCLASS = {60K35},
  MRNUMBER = {3097418},
MRREVIEWER = {Rinaldo\ Schinazi},
}

@article{hammond_infiniteloops,
author = {Alan Hammond and Milind Hegde},
title = {{Critical point for infinite cycles in a random loop model on trees}},
volume = {29},
journal = {The Annals of Applied Probability},
number = {4},
publisher = {Institute of Mathematical Statistics},
pages = {2067 -- 2088},
keywords = {Cyclic time random walk, random stirring model},
year = {2019},
doi = {10.1214/18-AAP1442},
URL = {https://doi.org/10.1214/18-AAP1442}
}

@article{bjornberg_ueltschi_trees1,
 AUTHOR = {Bj\"ornberg, Jakob E. and Ueltschi, Daniel},
     TITLE = {Critical parameter of random loop model on trees},
   JOURNAL = {Ann. Appl. Probab.},
  FJOURNAL = {The Annals of Applied Probability},
    VOLUME = {28},
      YEAR = {2018},
    NUMBER = {4},
     PAGES = {2063--2082},
      ISSN = {1050-5164,2168-8737},
   MRCLASS = {60K35 (82B10 82B20 82B26 82B31)},
  MRNUMBER = {3843823},
MRREVIEWER = {Antal\ A.\ J\'arai},
       DOI = {10.1214/17-AAP1315},
       URL = {https://doi.org/10.1214/17-AAP1315},
}

@misc{betz2024looppercolationversuslink,
      title={Loop percolation versus link percolation in the random loop model},
      author={Volker Betz and Andreas Klippel and Mino Nicola Kraft},
      year={2024},
      eprint={2408.00648},
      archivePrefix={arXiv},
      primaryClass={math.PR},
      url={https://arxiv.org/abs/2408.00648},
}

@misc{elboim2024infinitecyclesinterchangeprocess,
      title={Infinite cycles in the interchange process in five dimensions},
      author={Dor Elboim and Allan Sly},
      year={2024},
      eprint={2211.17023},
      archivePrefix={arXiv},
      primaryClass={math.PR},
      url={https://arxiv.org/abs/2211.17023},
}

@misc{poudevigne2022macroscopic,
      title={Macroscopic cycles for the interchange and quantum Heisenberg models on random regular graphs},
      author={R{\'e}my Poudevigne},
      year={2022},
      eprint={2209.13370},
      archivePrefix={arXiv},
      primaryClass={math.PR},
      url={https://arxiv.org/abs/2209.13370},
}

@misc{klippel2026macroscopic,
      title={Macroscopic loops in the random loop model on sparse random graphs},
      author={Andreas Klippel},
      year={2026},
      eprint={2604.20514},
      archivePrefix={arXiv},
      primaryClass={math.PR},
      url={https://arxiv.org/abs/2604.20514},
}

@article{bjornberg_ueltschi_treestheta,
  AUTHOR = {Bj\"ornberg, Jakob E. and Ueltschi, Daniel},
     TITLE = {Critical temperature of {H}eisenberg models on regular trees,
              via random loops},
   JOURNAL = {J. Stat. Phys.},
  FJOURNAL = {Journal of Statistical Physics},
    VOLUME = {173},
      YEAR = {2018},
    NUMBER = {5},
     PAGES = {1369--1385},
      ISSN = {0022-4715,1572-9613},
   MRCLASS = {82B10 (60K35 82B20 82B26 82B31)},
  MRNUMBER = {3878347},
       DOI = {10.1007/s10955-018-2154-2},
       URL = {https://doi.org/10.1007/s10955-018-2154-2},
}

@article{betz_sharptrees,
AUTHOR = {Betz, Volker and Ehlert, Johannes and Lees, Benjamin and Roth,
              Lukas},
     TITLE = {Sharp phase transition for random loop models on trees},
   JOURNAL = {Electron. J. Probab.},
  FJOURNAL = {Electronic Journal of Probability},
    VOLUME = {26},
      YEAR = {2021},
     PAGES = {Paper No. 133, 26 pages},
      ISSN = {1083-6489},
   MRCLASS = {60G50 (60K35)},
  MRNUMBER = {4343562},
       DOI = {10.1214/21-ejp677},
       URL = {https://doi.org/10.1214/21-ejp677},
}

@incollection {angel_permutations,
    AUTHOR = {Angel, Omer},
     TITLE = {Random infinite permutations and the cyclic time random walk},
 BOOKTITLE = {Discrete random walks ({P}aris, 2003)},
    SERIES = {Discrete Math. Theor. Comput. Sci. Proc.},
    VOLUME = {AC},
     PAGES = {9--16},
 PUBLISHER = {Assoc. Discrete Math. Theor. Comput. Sci., Nancy},
      YEAR = {2003},
   MRCLASS = {60G50 (60K35 82B41)},
  MRNUMBER = {2042369},
MRREVIEWER = {Thomas\ Polaski},
DOI={10.46298/dmtcs.3342},
}

@article{ueltschi_random_loops,
    AUTHOR = {Ueltschi, Daniel},
     TITLE = {Random loop representations for quantum spin systems},
   JOURNAL = {J. Math. Phys.},
  FJOURNAL = {Journal of Mathematical Physics},
    VOLUME = {54},
      YEAR = {2013},
    NUMBER = {8},
     PAGES = {Paper No. 083301, 40 pages},
      ISSN = {0022-2488,1089-7658},
   MRCLASS = {81Q35 (81R12 82B10)},
  MRNUMBER = {3135479},
MRREVIEWER = {J.\ A.\ van Casteren},
       DOI = {10.1063/1.4817865},
       URL = {https://doi.org/10.1063/1.4817865},
}

\footnotesize{

\noindent\textbf{Funding acknowledgement.} CM's research was partly funded by Deutsche Forschungsgemeinschaft (DFG, German Research Foundation) – SPP 2265 443916008. AK's research is funded by the Cusanuswerk e.V.
}

\end{document}